\documentclass[11pt,amsfonts]{article}
\usepackage{amsmath,amsfonts,amsthm,amssymb,color}
\usepackage[T1]{fontenc}
\usepackage{enumerate}
\usepackage{graphicx}
\usepackage{float}
\usepackage[ruled,vlined]{algorithm2e}
\usepackage{amssymb}
\usepackage{amsmath}
\usepackage{color}
\usepackage{layout}
\usepackage{amsfonts}
\usepackage{dsfont}
\usepackage{cite}
\usepackage{color}
\usepackage{listings}
\usepackage{anysize}
\usepackage[noend]{algpseudocode}
\usepackage{ulem}
\usepackage{latexsym}
\usepackage{ragged2e}
\usepackage{pdfsync}
\usepackage{hyperref}

\newtheorem{theorem}{Theorem}[section]
\newtheorem{prop}[theorem]{Proposition}
\newtheorem{lemma}[theorem]{Lemma}

\newtheorem{remark}[theorem]{Remark}
\newtheorem{hypothesis}{Hypothesis}

\def\real{{\mathord{{\rm I\kern-2.8pt R}}}}        % Fake blackboard bold R.
\def\inte{{\mathord{{\rm I\kern-2.8pt N}}}}

\def\sZZ{{\rm Z\kern-2.8ptem{}Z}}

\def\z{{\mathchoice
		{\sZZ}
		{\sZZ}
		{\rm Z\kern-0.30em{}Z}
		{\rm Z\kern-0.25em{}Z} }}
\def\sQQ{{\kern 0.27em \vrule height1.45ex width0.03em depth0em
		\kern-0.30em \rm Q}}
\def\qu{{\mathchoice
		{\sQQ}
		{\sQQ}
		{\kern 0.225em \vrule height1.05ex width0.025em depth0em \kern-0.25em \rm Q}
		{\kern 0.180em \vrule height0.78ex width0.020em depth0em \kern-0.20em \rm Q}
}}
\def\sCC{{\kern 0.27em \vrule height1.45ex width0.03em depth0em
		\kern-0.30em \rm C}}
\def\complex{{\mathchoice
		{\sCC}
		{\sCC}
		{\kern 0.225em \vrule height1.05ex width0.025em depth0em \kern-0.25em \rm C}
		{\kern 0.180em \vrule height0.78ex width0.020em depth0em \kern-0.20em \rm C}
}}

\font\tenmath=msbm10 \font\sevenmath=msbm7 \font\fivemath=msbm5
\newfam\mathfam \textfont\mathfam=\tenmath
\scriptfont\mathfam=\sevenmath \scriptscriptfont\mathfam=\fivemath

\newcommand{\ignore}[1]{}
\numberwithin{equation}{section}

\begin{document}
	
	\renewcommand{\thefootnote}{\fnsymbol{footnote}}

	\title{Euler scheme for stochastic  functional differential equations driven by fractional Brownian motion}
	
	\author{ Johanna Garz\'on$^{1,a}$   \ \ \ \ Jorge A. Le\'on$^{2,b}$  \ \ \ \ Jorge Lozada$^{2,c}$ \ \ \ \ Soledad Torres$^{3,d}$ \\
		%-----------------------------------------------------------
		\small $^{1}$ Departamento de Matem\'aticas, 
		Facultad de Ciencias,\\
		\small Universidad Nacional de Colombia,\\
		\small $^a$mjgarzonm@unal.edu.co\\ 
		%------------------------------------------------------- 
		\small $^{2}$ Departamento de Control Autom\'atico, Cinvestav,\\ 
		\small			Ciudad de M\'exico, Mexico,\\
		\small  $^b${joleon@cinvestav.mx}, \quad $^c${jorge.lozada@cinvestav.mx} \\ 
		%-------------------------------------------------------  
		\small $^{3}$ CIMFAV, Facultad de Ingeniería, Universidad de Valparaíso,\\ 
		\small Valparaíso, Chile,\\
		\small $^d$soledad.torres@uv.cl\\ 
	}

	\date{}
	
	\maketitle
	\begin{abstract}
		In this paper, we apply  rough paths techniques  to 
		provide an approximation of the solution of stochastic functional differential equations driven by fractional Brownian motion with Hurst parameter $H>1/2$. Here, the  involved stochastic integral is the Young one
		and the coefficient is evaluated in the set of $\lambda$-H\"older continuous functions on $[-\tau,0]$, for some suitable
		$\tau>0$ and $\lambda\in(1/2,H)$. The rate of convergence of our scheme is $1/n^{\gamma}$, for any $\gamma<2\lambda-1$. Also, numerical simulations are provided to illustrate our theoretical results.
	\end{abstract}
	\vskip0.3cm
	
	{\bf 2010 AMS Classification Numbers:} 34K07, 65C30.
	\vskip0.3cm
	{\bf Key Words and Phrases}:  Euler-Maruyama method, fractional Brownian motion, Fr\'echet derivative, H\"older continuous functions, 
	stochastic functional differential equations, Young integral.

	\section{Introduction}
	Stochastic functional differential equations (SFDEs) can be defined as a class of stochastic differential equations for which the coefficients depend on the history of the process itself. That is, these coefficients are defined on a function space, giving rise to the name of this class of equations.
	
	In this article, we consider a SFDE of the form
	
	\begin{eqnarray}
		\label{eqi1}
		dX(t)&=& f(\overline{X}_t)dB^H(t), \quad t\in [0, T], \\
		\overline{X}_0&=& \xi. \notag
	\end{eqnarray}
	Here, $B^H=\{B^H(t):t\in[0,T]\}$ is a fractional Brownian motion with Hurst parameter $H>1/2$ and the involved stochastic integral is the Young one introduced by Young\cite{Young} (see also Z\"ahle \cite{zahle}). The initial condition $\overline{X}_0=\xi$ is  a $C^\gamma([-\tau, 0]; \mathbb{R})$-valued random variable,
	where $C^\gamma([-\tau, 0]; \mathbb{R})$
	is the set of $\gamma$-H\"older continuous functions from $[-\tau, 0]$ to $\mathbb{R}$, and $f: C^\gamma([-\tau, 0]; \mathbb{R}) \to \mathbb{R}$.  For each $t\in [0, T]$, $\overline{X}_t\in {\cal{C}}^\gamma ([-\tau,0] ; \mathbb{R})$ denotes \textit{the segment} of $X$ in the past time interval $[t-\tau, t]$, which represents the functional memory or history of the solution $X$ at time $t$. It is defined by
	\begin{equation*}
		\label{eqdefXsub}
		\overline{X}_t(\theta)=X({t+\theta}), \quad  -\tau\le \theta \leq 0.
	\end{equation*}
	In particular, \textit{the segment} $\overline{X}_0=\xi$ contains the information of the process before $t = 0$.
	
	For Brownian motion (i.e. $H=1/2$), SFDEs were introduced by Itô and Nisio \cite{IN} in 1964. This type of equation has been extensively studied (see Bao et al.\cite{Bao} and Mohammed\cite{Moh, Moh2}, and  references therein), particularly the case of stochastic delay differential equations (SDDEs), which are a special case of SFDEs where \textit{the segment} $\overline{X}_t$ is the constant function given by $\overline{X}_t(\theta)=X(t-\tau)$ (according to the notation given above). FSDEs and SDDEs are useful for modeling random phenomena that depend not only on the present but also on the past states of the system and appear in various applications in fields such as biology, finance, physics, medicine, and population dynamics \cite{Anh, arriojas, Bahar, Brett, Mao3, Miekisz, smith}.	
	
	Since obtaining explicit solutions of SFDEs is not always possible, it is important to have numerical methods to approximate their solutions. When noise is still given by a Brownian motion, K\"ochler and Platen \cite{kuchler} give an approach to the derivation of discrete-time approximations for solutions of SDDE where the convergence is in a strong sense.  Mao and Sabanis \cite{Mao5} show that the Euler-Maruyama numerical solutions converge to the true solutions of SDDE under the local Lipschitz condition. Buckwar \cite{Buckwar} obtained approximations of strong solutions of an SDDE. Guo et al. \cite{Guo} study the strong convergence of the explicit numerical method for highly nonlinear SDDE using the truncated Euler-Maruyama method. Mao \cite{mao} established the strong mean-square convergence theory for numerical solutions of SFDE under the local Lipschitz condition and the linear growth condition. Mao \cite{Mao4} developed convergence in probability of numerical solutions of SDDE under Khasminskii-type conditions.  Milosevic \cite{Milosevic} studied the convergence of the approximate Euler-Maruyama solutions to the nonlinear class of SDDE. Zhou et al. \cite{Zhou} established the convergence of numerical solutions to a neutral stochastic delay differential equation with Markovian switching.

	On the other hand, our interest in working with noise driven by fractional Brownian motion is motivated by its properties, such as self-similarity and long-range dependence, as well as their applications to problems in several fields such as econometrics, internet traffic, geophysics, statistical physics, neuroscience, DNA sequences, telecommunications, finance, electrical engineering, and hydrology (see, for instance,  Biagini \cite{biaginiI}, Denk et al. \cite{DMS}, Kuo and Xie\cite{KS},
	Nualart \cite{nualart} and {\O}ksendal \cite{oksendalI}).\\
	
	Fractional Brownian motion (fBm) $B^H$, with parameter $H\in (0,1)$, is a centered Gaussian process with covariance function given by
	\begin{align*}
		%\label{eqcovarianza}
		\mathbb{E}(B^H(t)B^H(s))= \frac{1}{2} \left[ s^{2H}+t^{2H}- |t-s|^{2H} \right],\quad s,t \in[0,T].
	\end{align*}
	The Hurst parameter $H$  divides the fractional Brownian motion into three regimes corresponding to $H = 1/2$  with independent increments (i.e., Brownian motion  case), $0 < H < 1/2$, which means that the process is persistent, and when $1/2 < H < 1$, the process is antipersistent. In this article, we will focus only on the case $H>1/2$. See Le\'on \cite{Leon}, Mishura\cite{mishuraI} or Nualart \cite{nualart} for a general background.
	
	The classical Itô theory cannot be used to construct stochastic calculus with respect to fBm, since it is not a semi-martingale for $H\neq 1/2$. Consequently, various theories in the field of stochastic integration with respect to fractional Brownian motion have been developed \cite{ biaginiI, Leon, mishuraI, nualart,  ruso, zahle}. In this article, we will work with the pathwise Young integral, which is a generalized Stieltjes integral \cite{Young}.
	
	Existence and uniqueness of solutions to fractional stochastic differential equations (i.e,  stochastic differential equations driven by fBm) have been studied (Coutin and Qian\cite{coutin}, Hu and Nualart 
	\cite{hu}, Nualart and R\u{a}\c{s}canu \cite{nualart2}, {among others}),  and methods for approximating solutions have been devised (e.g.  Hu et al. \cite{Hu}, Liu and Tindel \cite{LiuT}, Mishura \cite{mishuraI}, Neuenkirch and Nourdin \cite{andreas}). For stochastic functional differential equations driven by fractional Brownian motion, there exist only a few papers published in this field. For the delay case, Ferrante and Rovira, in \cite{ferrante1}, established an existence and uniqueness result for SDDE driven by fBm with Hurst parameter $H>1/2$, when the coefficients are sufficiently regular,  and in \cite{ferrante2}, they studied the convergence of solution of SDDE when the delay goes to zero. Also, Neuenkirch and Nourdin \cite{andreas} studied existence and uniqueness for SDDE but for fBm with $H>1/3$ using rough paths theory. Boufoussi and Hajji \cite{Boufoussi} give a result of global existence and uniqueness for the solution of SFDE  driven by a fBm with $H > 1/2$ and non-constant delay, also they study the dependence of the solution on the initial condition. In Boufoussi et al. \cite{Boufoussi2}, authors deal with SFDE  in a Hilbert space. Lakhel and Mckibeen \cite{Lakhel} studied existence of mild solutions for neutral SFDE, in Hilbert space, by using the Wiener integral. Wilathgamuwa \cite{Wilathgamuwa} proves existence and uniqueness for SFDE using the pathwise Riemann-Stieltjes integral.  
	
	It is worth mentioning that, in this article, we focus only on the one-dimensional case since the extension to $\mathbb{R}^d$ ($d>1$) is natural from the one-dimensional equation by introducing appropriate indices. Specifically, equation \eqref{eqi1} generalizes to
	$$dX(t)= \sum_{i=1}^d f^i(\overline{X}_t)d(B^H)^i(t),$$
	where $f:\mathbb{R}^m\to \mathbb{R}^{m\times d}$, $f^i$ denotes the i-th column of the matrix $f(x)$  and $(B^H)^i$ is the i-th component of  $d-$dimensional fractional Brownian motion. Hence, the transition from the one-dimensional to the multidimensional analysis is straightforward. Therefore, we restrict our study to the one-dimensional case without loss of generality.

	Moreover, we consider equation \eqref{eqi1} without a drift term. The absence of drift simplifies the presentation and allows us to focus on the main analytical challenges posed by the delay and the irregularity of the noise. This simplification does not entail a loss of generality since the drift component is not essential to the structure of the problem. In fact, the drift can be naturally absorbed into the noise by augmenting the driving signal with an additional deterministic component. This allows the drift to be treated as part of the stochastic input, thereby avoiding additional complications in both the theoretical and numerical analysis. With this reformulation, we are left with the discretization of the following equation over the interval $[0,T]$.
	
	To the best of our knowledge, the only work on the numerical approximation of SFDEs driven by a fractional Brownian motion is that of by Garz\'on et al. \cite{Garzon}, where the authors propose a discrete-time approximation for the solution of SDDEs and determine its rate of convergence. Following the ideas presented in \cite{Garzon} and \cite{mao} (in the Brownian motion case), in this article we define an Euler-Maruyama scheme $X^{n}$ to approximate the unique solution of equation \eqref{eqi1} and establish its rate of convergence. 
	In this article, we extend the results of Garz\'on et al. \cite{Garzon} by considering a more general delay structure. Instead of focusing on a single fixed delay, we incorporate the full discrete mesh of delay points, reflecting the fact that the delay arises from a functional dependence on past values. We define the Euler scheme over the entire continuous interval and, employing rough path techniques, establish the convergence of the scheme to a functional stochastic differential equation driven by fractional noise. In this case, the convergence rate found is of order $1/n^{\gamma}$, for any $\gamma<2\lambda-1$, analogous to the result obtained by \cite{Garzon}.
	
	This paper is organized in the following way. Some preliminaries are presented in Section \ref{sec:preliminaries}. Euler-Maruyama scheme is defined in Section \ref{Euler}. The rate of convergence of the numerical scheme is established in Section \ref{sec:conv}. A numerical example is given in Section \ref{sec:examples}.

	\section{Preliminaries}\label{sec:preliminaries}
	In this section, we introduce the framework and the notation that we use in this paper.
	
	Throughout the paper, for $T>0$ fixed, we suppose that we have a fractional Brownian motion $B^H=\{B^H(t):
	t\in[0,T]\}$ with Hurst parameter $H\in(1/2,1)$ defined on a complete probability space $(\Omega, \mathcal{F}, P)$. The fractional Brownian motion $B^H$ is a zero-mean Gaussian process with covariance function
	$$	\mathbb{E}(B^H(t)B^H(s))= \frac{1}{2} \left[ s^{2H}+t^{2H}- |t-s|^{2H} \right],\quad s,t\in[0,T].$$
	\subsection{H\"older continuous function and Young integral}
	The purpose of this section is to introduce the Young integral for H\"older continuous functions and some properties of last family of functions that we need in the remainder of this article. For a detailed exposition on Young integration, the reader can consult Le\'on \cite{Leon}, Le\'on and Tindel \cite{LeonTindel}, or Z\"ahle \cite{zahle}.
	
	Here, we fix a partition  $\pi:\{0=t_{0}<t_{1}<\cdots<t_{n}=T\}$  of the interval $[0,T]$. For $s,t\in [0,T]$, $s<t$, we denote by $[\![ s, t ]\!]$ the discrete interval $\pi \cap [s, t]$. Moreover, for $k\in\mathbb{N}$ and $I$ either the interval $[s,t]$ or the discrete interval $[\![ s, t ]\!]$, ${\mathcal S}_{k}(I)$ represents the simplex $\{(s_1, \cdots, s_{k})\in I^k: s_1\le \cdots \le s_k\}$.

	Now, we introduce some basic concepts on Young integration theory.
	
	For a vector space $V$ and two functions $f\in C([0,T],V)$ and $g\in C({\mathcal S}_{2}([0,T]),V)$, we set 
	\begin{equation*}\label{eq:def-delta}
		\delta f(s,t)= f({t})-f({s})
		\ \text{and}\
		\delta g(s,u,t) = g(s,t)-g({s,u})-g({u,t}),\quad (s,u,t)\in{\mathcal S}_{3}([0,T]).
	\end{equation*}

	We denote by $C^\gamma([a, b]; \mathbb{R})$ the set of $\gamma$-H\"older continuous functions from $[a, b]$ to $\mathbb{R}$ with $\gamma \in (0,1)$. In this space, consider the seminorm 
	$$\left\|\phi\right\|_{\gamma, [a, b]}: =  \sup_{x,y\in [a, b]  \atop x\neq y} \frac{|\phi(x)-\phi(y)|}{|x-y|^\gamma}$$
	and the supremum norm
	$$\left\|\phi\right\|_{[a, b]}=  \sup_{x\in [a, b]}{|\phi(x)|}.$$
	For the sake of the notation, we will omit the involved  interval in these norms if it is obvious.
	
	Now, we are ready to introduce the integral considered in equation \eqref{eqi1}, which is introduced
	by Young \cite{Young}.
	\begin{prop}[Young integral for H\"older continuous functions]
		\label{prop:young-integral}
		Let $\alpha, \  \beta \in (0, 1]$ such that $\alpha+\beta> 1$,
		$h\in C^\alpha([0, T]; \mathbb{R}) $ and $g\in C^\beta([0, T]; \mathbb{R})$. Then, for $(s,t)\in{\mathcal S}_{2}([0,T])$, the Young integral 
		$\int_{s}^{t} h(r) \, dg(r)$
		is defined as the limit of Riemann sums. That is,
		$$\int_{s}^{t} h(r) \, dg(r)=\lim_{|\pi_{s,t}|\to0}\sum_{i=0}^{m-1}h(r_i)\delta g(r_i,r_{i+1}),$$
		where $\pi_{s,t}=\{s=t_0<t_1,\ldots<,t_m=t\}$ is a partition of $[s,t]$.
		Furthermore, there exists a constant $c_{\alpha,\beta}>0$ such that
		\begin{equation*}
			\left| \int_{s}^{t} h(r) \, dg(r)  \right|
			\le
			|h(s)| |\delta g(s,t)|  +c_{\alpha,\beta} \|h\|_{\alpha,[s,t]} \|g\|_{\beta,[s,t]} |t-s|^{\alpha+\beta}  .
		\end{equation*}
	\end{prop}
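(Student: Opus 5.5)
The plan is to prove the statement through the sewing lemma, that is, Young's classical argument of removing partition points one at a time. For $(u,v)\in{\mathcal S}_2([s,t])$ I would consider the germ $\Xi(u,v)=h(u)\,\delta g(u,v)$. A direct computation gives, for $u\le w\le v$,
\begin{equation*}
\delta\Xi(u,w,v)=\Xi(u,v)-\Xi(u,w)-\Xi(w,v)=-\bigl(h(w)-h(u)\bigr)\,\delta g(w,v).
\end{equation*}
Consequently
\begin{equation*}
|\delta\Xi(u,w,v)|\le \|h\|_{\alpha,[s,t]}\|g\|_{\beta,[s,t]}|v-u|^{\alpha+\beta}.
\end{equation*}
Since $\alpha+\beta>1$, this is the key superadditivity-type estimate.

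\textbf{Maximal inequality.} Fix a partition $\pi_{s,t}=\{s=r_0<\cdots<r_m=t\}$ and let $S_\pi=\sum_i \Xi(r_i,r_{i+1})$. If $m\ge 2$, there is an interior point $r_j$ ($1\le j\le m-1$) with $r_{j+1}-r_{j-1}\le 2(t-s)/(m-1)$, by a pigeonhole argument on the $m-1$ such gaps, whose sum is at most $2(t-s)$. Removing $r_j$ changes $S_\pi$ by exactly $\delta\Xi(r_{j-1},r_j,r_{j+1})$, so the change is at most $\|h\|_\alpha\|g\|_\beta (2(t-s)/(m-1))^{\alpha+\beta}$. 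Iterating down to the trivial partition $\{s,t\}$ gives
\begin{equation*}
|S_\pi-h(s)\delta g(s,t)|\le 2^{\alpha+\beta}\zeta(\alpha+\beta)\|h\|_{\alpha,[s,t]}\|g\|_{\beta,[s,t]}|t-s|^{\alpha+\beta}.
\end{equation*}
This bound is uniform in $\pi$. Hence the inequality follows once the limit is known to exist, with $c_{\alpha,\beta}=2^{\alpha+\beta}\zeta(\alpha+\beta)$.

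\textbf{Existence of the limit.} This is the main obstacle, because the maximal inequality only gives uniform boundedness. I would take two partitions $\pi,\pi'$ of mesh at most $\varepsilon$ and compare each with their common refinement $\pi\cup\pi'$. For $\pi\subset\pi''$, I apply the maximal inequality on each subinterval $[r_i,r_{i+1}]$ of $\pi$, using the points of $\pi''$ inside it. Summing over $i$ gives
\begin{equation*}
|S_{\pi''}-S_\pi|\le c_{\alpha,\beta}\|h\|_\alpha\|g\|_\beta\sum_i|r_{i+1}-r_i|^{\alpha+\beta}\le c_{\alpha,\beta}\|h\|_\alpha\|g\|_\beta\,\varepsilon^{\alpha+\beta-1}(t-s).
\end{equation*}
The right-hand side tends to $0$ as $\varepsilon\to0$, so the Riemann sums are Cauchy as $|\pi_{s,t}|\to0$. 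The limit therefore exists, and it is independent of the partitions chosen.

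\textbf{Conclusion.} Passing to the limit in the maximal inequality yields the stated bound. Along the way I would also note that the limit is additive on intervals. That property is not needed for the statement, but it is convenient later in the paper.
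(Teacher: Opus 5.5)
Your proof is correct. The identity $\delta\Xi(u,w,v)=-(h(w)-h(u))\,\delta g(w,v)$, the pigeonhole choice of a removable point, the uniform bound with $c_{\alpha,\beta}=2^{\alpha+\beta}\zeta(\alpha+\beta)$, and the Cauchy estimate through the common refinement $\pi\cup\pi'$ all check out.

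The paper gives no argument of its own; it only cites Le\'on--Tindel (Theorem 2.5) and Le\'on (Theorem 33). In the Le\'on--Tindel (Gubinelli-type) algebraic framework the order is reversed:
\begin{enumerate}[(a)]
\item One first proves abstractly that every three-parameter $k$ in the range of $\delta$ with $|k(s,u,t)|\le c|t-s|^{\mu}$, $\mu>1$, equals $\delta(\Lambda k)$ for a unique two-parameter $\Lambda k$ with $|\Lambda k(s,t)|\le C_\mu c|t-s|^{\mu}$.
\item One then defines $\int_s^t h\,dg=h(s)\delta g(s,t)+\Lambda_{st}(\delta h\,\delta g)$, where $(\delta h\,\delta g)(s,u,t)=\delta h(s,u)\delta g(u,t)$.
\item Only afterwards is this identified with the limit of Riemann sums, via $S_\pi-\int_s^t h\,dg=-\sum_i\Lambda_{r_ir_{i+1}}(\delta h\,\delta g)=O(|\pi|^{\alpha+\beta-1})$.
\end{enumerate}

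That route buys a reusable operator and an intrinsic characterization of the integral: it is the unique additive quantity within $O(|t-s|^{\alpha+\beta})$ of $h(s)\delta g(s,t)$. Your route is more direct and proves exactly what is stated, with an explicit constant. Your point-removal estimate only uses $|\delta\Xi(u,w,v)|\le C|v-u|^{\mu}$ with $\mu>1$, so it is itself a proof of the sewing lemma. In fact your maximal inequality is precisely the discrete estimate of Lemma~\ref{lemswing}(i), applied on $\pi$ to $A(u,v)=\sum_{r_i\in[u,v)}\Xi(r_i,r_{i+1})-\Xi(u,v)$. This $A$ vanishes on consecutive points and satisfies $\delta A=-\delta\Xi$. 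So your argument runs on the same mechanism the paper later relies on in its error analysis.
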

	For the proof of this result, the reader can see \cite{LeonTindel} (Theorem 2.5) and references
	therein, or \cite{Leon} (Theorem 33).
	
	It is well-known that the paths of  fractional Brownian motion $B^H$  belong to the space $C^\gamma([0, T]; \mathbb{R})$, for any $\gamma<H$. Therefore, the integral in equation \eqref{eqi1} is interpreted as a
	pathwise integral (i.e., $\omega$ by $\omega$).
	
	On the other hand, we will apply the following result in the remaining of this article. Its proof 
	can be found in \cite{LiuT} (Lemma 2.5) and \cite{Garzon} (Lemma 2.2).
	\begin{lemma}
		\label{lemswing}
		Let $h$ be a function defined on $[\![ 0, T]\!]^{2}$ and $\mu>1$. Then the following inequalities hold true:
		
		\begin{enumerate}[(i)]
			\item Whenever $h(t_i, t_{i+1})=0$ for all $0\le i <n-1$, we have
			$$
			\left\|h\right\|_{\mu}\leq C_{\mu}\left\|\delta h\right\|_{\mu},
			$$
			with
			$$	\left\|h\right\|_{\mu}=\sup_{(u,v)\in\mathcal{S}_2([\![0, T ]\!])}\frac{|h(u,v)|}{|v-u|^\mu}
			\quad\text{and}\quad
			\left\|\delta h\right\|_{\mu}=\sup_{(s,u,v)\in\mathcal{S}_3([\![0, T ]\!])}
			\frac{|\delta h(s,u,v)|}{|v-s|^\mu}.$$
			\item In the general case where the quantities $h(t_i, t_{i+1})$ do not all vanish, we obtain:
			\begin{equation*}
				\left\|h\right\|_{\mu}\leq C_{\mu}\left\|\delta h\right\|_{\mu}
				+ \mathcal{M}_{\mu}(h),
				\quad\text{where}\quad
				\mathcal{M}(h) \equiv \sup\left\{ \frac{|h(t_i, t_{i+1})|}{|t_{i+1}-t_{i}|^{\mu}} ; \,  0\le i <n-1 \right\} .
			\end{equation*}
		\end{enumerate}
	\end{lemma}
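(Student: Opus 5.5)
This is the discrete sewing lemma; I would prove it by removing points of the partition one at a time. Fix $(u,v)\in\mathcal{S}_2([\![0,T]\!])$ and write $u=t_p<t_{p+1}<\cdots<t_q=v$ with $m=q-p$. If $m=1$, then in case (i) $h(u,v)=0$, and in case (ii) $|h(u,v)|\le \mathcal{M}_\mu(h)|v-u|^\mu$ by definition. So assume $m\ge 2$ and set $P_m=\{t_p,\dots,t_q\}$.

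The deletion step is the following. For an interior point $t_j$ of $P_m$ with neighbours $t_{j-1},t_{j+1}$ in the current set, the identity
\[
h(t_{j-1},t_{j+1})-h(t_{j-1},t_j)-h(t_j,t_{j+1})=\delta h(t_{j-1},t_j,t_{j+1})
\]
shows that removing $t_j$ changes the sum $S(P)=\sum h(\text{consecutive pairs})$ by at most $\|\delta h\|_\mu |t_{j+1}-t_{j-1}|^\mu$. Among the $m-1$ interior points we can choose $t_j$ with
\[
|t_{j+1}-t_{j-1}|\le \frac{2(v-u)}{m-1}.
\]
This holds because the sum of these double gaps over all interior points is at most $2(v-u)$.

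Iterating from $m$ points down to the single pair $(u,v)$, where $S=h(u,v)$, gives
\[
\Bigl|h(u,v)-\sum_{i=p}^{q-1}h(t_i,t_{i+1})\Bigr|\le \|\delta h\|_\mu\, 2^\mu |v-u|^\mu\sum_{k\ge 1}k^{-\mu}.
\]
The series converges because $\mu>1$, so we may take $C_\mu=2^\mu\zeta(\mu)$. In case (i) the sum over consecutive pairs vanishes, which yields the bound $\|h\|_\mu\le C_\mu\|\delta h\|_\mu$.

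In case (ii) each consecutive term satisfies $|h(t_i,t_{i+1})|\le \mathcal{M}_\mu(h)|t_{i+1}-t_i|^\mu$. Since $\mu>1$, superadditivity gives $\sum_i |t_{i+1}-t_i|^\mu\le |v-u|^\mu$, so the consecutive sum is bounded by $\mathcal{M}_\mu(h)|v-u|^\mu$. Dividing by $|v-u|^\mu$ and taking the supremum over $(u,v)$ gives (ii).

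The only delicate point is the choice of the point to delete so that the accumulated errors form a summable series. This is the averaging argument above, and it is exactly where the hypothesis $\mu>1$ enters. One also has to handle the index range $0\le i<n-1$ in the statement, which should read $0\le i\le n-1$ so that every consecutive pair is covered. I would note this as a harmless indexing typo.
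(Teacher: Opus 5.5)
Your proof is correct and follows the standard route. The paper gives no proof of its own and only cites Liu--Tindel (Lemma 2.5) and Garz\'on et al.\ (Lemma 2.2), where the discrete sewing lemma rests on this same Young-type removal of a point with $|t_{j+1}-t_{j-1}|\le 2(v-u)/(m-1)$, giving a constant of order $2^\mu\zeta(\mu)$, and (ii) follows by controlling the additive part $\sum_i h(t_i,t_{i+1})$ through superadditivity of $x\mapsto x^\mu$. Your remark on the index range is also right: with $0\le i<n-1$ as written, (i) is false. Take $h$ additive with $h(t_{n-1},t_n)\neq 0$ and all other consecutive values zero; then $\delta h=0$ but $\|h\|_\mu>0$, so the range must read $0\le i\le n-1$.
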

	\subsection{Stochastic functional differential equations}
	Here, we establish the existence of a unique solution to the stochastic differential equation
	\begin{eqnarray}
		\label{eq1}
		dX(t)&=& f(\overline{X}_t)dB^H(t), \quad t\in (0, T] ,\\
		\overline{X}_0&=& \xi. \notag
	\end{eqnarray}
	Remember that $B^H$ is a fractional Brownian motion with Hurst parameter $H>1/2$ and that, for $t\in[0,T]$, $\overline{X}_t:[-\tau,0]\to\mathbb{R}$ is given by $\overline{X}_t(\cdot)=X(t+\cdot)$.
	
	In order to state the conditions on $\xi$ and the coefficient $f$, we need to introduce the following space. 
	
	Henceforth, we deal with a fixed $\tau>0$. For $0\le a_1<a_2$ and $\rho\in C^\mu([a_1-\tau,a_1];\mathbb{R})$,
	we introduce the complete metric space 
	$$C^\mu_{\rho, a_1, a_2}(\mathbb{R}):=\{\zeta\in C^\mu ([a_1 - \tau, a_2], \mathbb{R}): \zeta=\rho \ \text{on} \ [a_1-\tau, a_1]\}$$
	with the distance
	$$d_{\mu, a_1, a_2,\rho} (\zeta_1,\zeta_2) = \| \zeta_1 - \zeta_2 \|_{\mu, [a_1-\tau, a_2]}.$$
	To guarantee the existence of a solution to equation (\ref{eq1}), we suppose that the coefficient $f $ satisfies the following hypotheses:
	
	\begin{hypothesis}
		\label{H1}
		There exists a positive constant $M_1$ and $\lambda \in  (1/2, H)$ such that
		$\xi\in C^\lambda([-\tau,0];\mathbb{R})$ and $f:C^\lambda([-\tau,0];\mathbb{R})\to\mathbb{R}$ satisfies
		$$|f(\psi_2) - f(\psi_1)| \leq  M_1 \|\psi_2-\psi_1\|,\quad
		\text{uniformly in} \ \psi_1, \psi_2 \in C^{\lambda} ([-\tau, 0], \mathbb{R}).
		$$
		
	\end{hypothesis}

	\begin{hypothesis}
		\label{H2}
		Let $a = (a_1, a_2)$, $0 \leq a_1 < a_2 \leq T$ and $\lambda$ as in Hypothesis \ref{H1}. For any $N\ge 1$ and $\rho \in C^\lambda ([a_1 - \tau, a_1])$, there exists a positive constant $c_N$ such that
		$$\|\mathcal{U}(Z)-\mathcal{U}(W)\|_{\lambda, [a_1, a_2]} \le c_N \|Z-W\|_{\lambda, [a_1-\tau, a_1]},$$
		for $Z, W\in C^\lambda_{\rho, a_1, a_2}(\mathbb{R})$, satisfying
		$$\max\{\|Z\|_{\lambda, [a_1-\tau, a_2]}, \|W\|_{\lambda, [a_1-\tau, a_2]}\}\leq N.$$
		Here,
		\begin{equation*}
			\label{eqdefUZ}
			\mathcal{U}(Z)(s)=f(\overline{Z}_s), \quad s\in[a_1, a_2].
		\end{equation*}
	\end{hypothesis}
	\begin{remark}
		\label{remarklineal}
		Note that Hypothesis \ref{H1} implies that there exist a positive constant $M_2$ such that
		$$|f(\psi)| \leq  M_2 (1+\left\|\psi\right\|_{[-\tau, 0]}),\quad
		\text{for all}\ \psi \in C^{\lambda} ([-\tau, 0], \mathbb{R}).$$
	\end{remark}
	
	Now, we can state an existence and uniqueness result for the solution to equation \eqref{eq1}.
	\begin{theorem} 
		\label{teo solution}
		Under Hypotheses \ref{H1} and \ref{H2}, equation \eqref{eq1} has a unique solution $X$ in $C^\lambda_{\xi, 0, T}(\mathbb{R})$.
		Furthermore, for all $(s, t)\in \mathcal S_{2}([0,T])$,
		\begin{equation}
			\label{HolderX}
			\delta X({s, t})=f(\overline{X}_s)\delta B^H(s, t) + R^{X}(s,t),
		\end{equation}
		where $R^X$ satisfies
		\begin{equation}
			\label{HolderX2}
			|R^{X}(s,t)|\leq C_X |t-s|^{2\lambda},\quad
			\text{for some positive constant}\  C_X.
		\end{equation}

	\end{theorem}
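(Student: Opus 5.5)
We prove Theorem \ref{teo solution} by a fixed point argument on short intervals, glued step by step, followed by a Young-integral estimate for the remainder.

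\textbf{Local step.} Fix $\omega$ and $\lambda<\lambda'<H$, so that $B^H\in C^{\lambda'}([0,T];\mathbb{R})$ and $\lambda+\lambda'>1$. Let $0\le a_1<a_2\le T$ and let $\rho\in C^\lambda([a_1-\tau,a_1];\mathbb{R})$ be a given history ($\rho=\xi$, shifted, on the first interval). On $C^\lambda_{\rho,a_1,a_2}(\mathbb{R})$ define
$$\Gamma(Z)(t)=\rho(a_1)+\int_{a_1}^t f(\overline Z_s)\,dB^H(s),\qquad t\in[a_1,a_2],$$
and $\Gamma(Z)=\rho$ on $[a_1-\tau,a_1]$. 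This is well defined: $\mathcal U(Z)$ is $\lambda$-H\"older, since by Hypothesis \ref{H1}
$$|f(\overline Z_t)-f(\overline Z_s)|\le M_1\|\overline Z_t-\overline Z_s\|_{[-\tau,0]}\le M_1\|Z\|_\lambda|t-s|^\lambda .$$
So Proposition \ref{prop:young-integral} applies, and together with Remark \ref{remarklineal} it gives
$$\|\Gamma(Z)\|_{\lambda,[a_1,a_2]}\le \|B^H\|_{\lambda'}\Big(M_2(1+\|Z\|_\infty)(a_2-a_1)^{\lambda'-\lambda}+c\,M_1\|Z\|_\lambda(a_2-a_1)^{\lambda'}\Big).$$
Here $\|Z\|_\infty\le \|\rho\|_\infty+\|Z\|_\lambda (T+\tau)^\lambda$. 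Hence, for $a_2-a_1$ small enough (depending only on $\|B^H\|_{\lambda'}$, $M_1$, $M_2$ and a radius $N$), $\Gamma$ maps the ball $\{\|Z\|_{\lambda,[a_1-\tau,a_2]}\le N\}$ into itself.

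\textbf{Contraction.} For $Z,W$ in the ball, $\Gamma(Z)-\Gamma(W)=\int_{a_1}^{\cdot}(\mathcal U(Z)-\mathcal U(W))\,dB^H$. Note that $\mathcal U(Z)(a_1)=\mathcal U(W)(a_1)=f(\rho)$. So Proposition \ref{prop:young-integral}, combined with Hypothesis \ref{H2}, gives
$$\|\Gamma(Z)-\Gamma(W)\|_\lambda\le C\|B^H\|_{\lambda'}\big(\|\mathcal U(Z)-\mathcal U(W)\|_\infty(a_2-a_1)^{\lambda'-\lambda}+c_N\|Z-W\|_\lambda(a_2-a_1)^{\lambda'}\big).$$
We bound $\|\mathcal U(Z)-\mathcal U(W)\|_\infty\le M_1\|Z-W\|_\infty\le M_1 (a_2-a_1)^\lambda\|Z-W\|_\lambda$, using that $Z-W=0$ on $[a_1-\tau,a_1]$. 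Thus $\Gamma$ is a contraction for $a_2-a_1$ small, and Banach's fixed point theorem yields a unique local solution.

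\textbf{Main obstacle: globalizing.} The step length depends on $N$, so we need an a priori bound that keeps the length from shrinking. The linear growth of Remark \ref{remarklineal} gives a bound on $\|X\|_\infty$ that grows at most linearly in the data of each step. We take $N$ of the form $2(1+\|\rho\|_\infty+\|\rho\|_\lambda)$, and, following the Hu--Nualart type argument, we choose the step $\Delta$ depending only on $\|B^H\|_{\lambda'}$, $M_1$, $M_2$ through the linear-growth estimate. This gives a geometric bound on the sup norm over finitely many steps, covering $[0,T]$ in $\lceil T/\Delta\rceil$ steps. For the contraction we may instead use a weighted (Bielecki-type) H\"older norm, or the norm $\|\cdot\|_\infty+\|\cdot\|_\lambda$, to avoid dependence on $c_N$. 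Gluing the local solutions, with H\"older seminorms across adjacent intervals bounded by sums, gives $X\in C^\lambda_{\xi,0,T}(\mathbb{R})$. Uniqueness follows from uniqueness on each step, by induction.

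\textbf{The decomposition \eqref{HolderX}.} Set $R^X(s,t)=\int_s^t (f(\overline X_r)-f(\overline X_s))\,dB^H(r)$. Proposition \ref{prop:young-integral}, with the vanishing initial term, gives
$$|R^X(s,t)|\le c\,M_1\|X\|_{\lambda,[-\tau,T]}\|B^H\|_{\lambda'}|t-s|^{\lambda+\lambda'}\le C_X|t-s|^{2\lambda},$$
where the last inequality uses $\lambda'>\lambda$ and $|t-s|\le T$, since $|t-s|^{\lambda+\lambda'}\le T^{\lambda'-\lambda}|t-s|^{2\lambda}$. The constant $C_X$ is random but finite, and this proves \eqref{HolderX2}.
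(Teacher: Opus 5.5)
\textbf{Where you agree with the paper.} Your local fixed-point step and your proof of \eqref{HolderX}--\eqref{HolderX2} follow the paper's route. The paper also runs the Le\'on--Tindel contraction on short intervals. It replaces boundedness of $f$ by the linear bound $\|Y\|_{\infty,[0,\varepsilon]}\le M_1(T+\tau)^\lambda\|z\|_{\lambda}+|f(\xi)|$, which is equivalent to your use of Remark~\ref{remarklineal}. It bounds $R^X(s,t)=\int_s^t(f(\overline X_u)-f(\overline X_s))\,dB^H(u)$ exactly as you do: the initial term vanishes and $\|F\|_\lambda\le M_1\|X\|_\lambda$. The only difference is that it uses $\lambda$ for both exponents, which is allowed since $2\lambda>1$. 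You also read Hypothesis~\ref{H2} with $\|Z-W\|_{\lambda,[a_1-\tau,a_2]}$ on the right-hand side. That is the only meaningful reading, since the printed version vanishes on $C^\lambda_{\rho,a_1,a_2}(\mathbb{R})$.

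\textbf{The gap is in your globalization paragraph.} Your contraction constant is of order $\|B^H\|_{\lambda'}(M_1+c\,c_N)(a_2-a_1)^{\lambda'}$, so the admissible step depends on $c_N$. No change of norm (Bielecki weights, or $\|\cdot\|_\infty+\|\cdot\|_\lambda$) avoids this. Hypothesis~\ref{H2} is your only control of $\mathcal U(Z)-\mathcal U(W)$ in H\"older norm, and any such argument still needs all iterates to stay in one fixed ball of radius $N$.

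Nor can you simply shorten the step to $h_2(N)$ and keep your geometric bound, because that bound is tied to the $N$-independent step $\Delta$. If you iterate ball radii with shorter steps, the number of steps grows while the radii may double at each step: invariance only gives $\|\rho_{k+1}\|_\lambda\le N_k=2(1+\|\rho_k\|_\infty+\|\rho_k\|_\lambda)$. So step size and $N$ determine each other circularly.

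\textbf{The missing idea} is to decouple them through an a priori estimate for \emph{solutions}. Any solution on $[-\tau,a]$, $a\le T$, satisfies $\|X\|_{\infty,[-\tau,a]}+\|X\|_{\lambda,[-\tau,a]}\le K$, with $K$ depending only on $\xi$, $\|B^H\|_{\lambda'}$, $M_1$, $M_2$, $T$, $\tau$. To prove it, cover $[0,a]$ by intervals $I_k$ of a fixed length $h_1$, chosen as in your invariance step and depending only on $\|B^H\|_{\lambda'}$, $M_1$, $M_2$, $T+\tau$. On each $I_k$, absorb the finite quantity $\|X\|_{\lambda,I_k}$, and iterate the resulting linear recursion finitely many times.

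At every stage of the construction the history is itself a solution, so this bound applies to it. You may then fix $N=2(1+K)$ once and for all and use the uniform step $\min(h_1,h_2(N))$. The same bound (or simply taking $N$ at least the norms of two given solutions) is what turns uniqueness in the ball into uniqueness in $C^\lambda_{\xi,0,T}(\mathbb{R})$. The paper is terse here too, deferring to Le\'on--Tindel, but the route you actually propose would not close as written.
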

	\begin{remark}
		Le\'on and Tindel \cite{LeonTindel} have shown that equation \eqref{eq1} has a unique solution in 
		$C^\lambda_{\xi, 0, T}(\mathbb{R})$ assuming that, in addition to Hypotheses \ref{H1} and \ref{H2}, there exists
		a constant $M>0$ such that
		$$|f(\zeta)|\le M, \quad\text{for every } \zeta\in C^{\lambda}([-\tau,0];\mathbb{R}).$$
		Moreover, this existence and uniqueness result does not depend on the Fréchet differentiability of $f$ unlike reference \cite{Boufoussi}.
	\end{remark}
	\begin{proof}
		Here, we consider $\gamma\in(\lambda,H)$ and $\omega\in\Omega$ such that $B^H(\omega,\cdot)$ is in $C^\gamma([0,T];\mathbb{R})$. As usual, we omit $\omega$ in the following calculations.
		
		In the proof of Theorem 3.2 in Le\'on and Tindel \cite{LeonTindel}, the authors have chosen $\varepsilon\in(0,T)$ and introduced the map $\Gamma: \mathcal{C}_{\xi, 0, \varepsilon}^\lambda\left(\mathbb{R}\right) \rightarrow \mathcal{C}_{\xi, 0, \varepsilon}^\lambda\left(\mathbb{R}\right)$ given as follows: if $z \in \mathcal{C}_{\xi, 0, \varepsilon}^\lambda\left(\mathbb{R}\right)$, then $\Gamma(z)=\hat{z}$, where $\hat{z}_t=\xi_t$ for $t \in[-\tau, 0]$, and $\delta \hat{z}(s, t)=\int_s^tY(u) d B^H(u),$ where $Y(u)=f\left(\overline{z}_u\right)$, for $ s, t \in[0, \varepsilon]$.  In consequence, using Proposition \ref{prop:young-integral} and
		proceeding as in the proof of Theorem 3.2 in \cite{LeonTindel}, we obtain
		\begin{equation}
			\label{eqA3}
			\| \delta\hat{z}\|_{\lambda,[0, \varepsilon]} \leq\|Y\|_{\infty,[0, \varepsilon]}\|B^H\|_\gamma \varepsilon^{\gamma-\lambda}+c_{\gamma, \lambda}\|Y\|_{\lambda,[0, \varepsilon]}\|B^H\|_\gamma \varepsilon^\gamma.
		\end{equation}
		Moreover, Hypothesis \ref{H1} implies  that, for $u \in [0,\varepsilon]$,
		$$
		|f\left(\overline{z}_u\right)| \leq |f\left(\overline{z}_u\right)-f\left(\overline{z}_0\right)|+|f\left(\overline{z}_0\right)| \leq M_1 \sup _{\theta \in[-\tau, 0]}\left|z_{u+\theta}-z_{\theta}\right|+|f\left(\xi\right)|.
		$$
		Consequently, we are able to write
		\[\|Y\|_{\infty,[0,\varepsilon]}\leq M_1 (T+\tau)^\lambda \|z\|_{\lambda,[-\tau, \varepsilon]} + C_\xi,\] 
		where $C_\xi :=|f(\xi)|$. Now, suppose that $\|z\|_{\lambda,[0,\varepsilon]}\le N_1$. Then, \cite{LeonTindel} (Lemma 3.1) and \eqref{eqA3} 
		yield
		\begin{equation*}
			\| \hat{z}\|_{\lambda,[0, \varepsilon]} \leq M_1 (T+\tau)^\lambda\|B^H\|_\gamma \varepsilon^{\gamma-\lambda}N_1+C_\xi \|B^H\|_\gamma\varepsilon^{\gamma-\lambda}+M_1c_{\gamma, \lambda} N_1 \varepsilon^\gamma\|B^H\|_\gamma.
		\end{equation*}
		Therefore, proceeding as in \cite{LeonTindel} (proof of Theorem 3.2), equation \eqref{eq1} has a unique solution in $C^\lambda_{\xi, 0, T}(\mathbb{R})$ 
		by chosen  
		$$\varepsilon=\left[6 M_1 (T+\tau)^\gamma\|B^H\|_\gamma\right]^{-1 / \gamma-\lambda} \wedge \left[6M_1 c_{\gamma, \lambda}\|B^H\|_\gamma\right]^{-1 / \gamma} \wedge 1$$ 
		
		and  $N_1 \geq \sup\{6 C_\xi\|B^H\|_\gamma,2\|\xi\|_{\lambda,[\tau,0]}\}$
		instead of those given in the proof of Theorem 3.2 in \cite{LeonTindel}.
		It is worth mentioning that we need to do these changes because now we do not have that there exists some constant $M>0$ such that $|f(\zeta)|\le M$, for every $\zeta\in C^{\lambda}([-\tau,0];\mathbb{R}).$	
		
		Finally, we deal with \eqref{HolderX} and \eqref{HolderX2}. Let $(s, t)\in \mathcal S_{2}([0,T])$,
		then
		\begin{align*}
			\delta X({s,t})&=X(t)-X(s)= \int_s^t f(\overline{X}_u)dB^H_u \notag \\
			&= \int_s^t f(\overline{X}_s)dB^H(u)  + \int_s^t [f(\overline{X}_u)- f(\overline{X}_s)]dB^H(u)   \notag \\
			&= f(\overline{X}_s)\delta B^H(s,t) + R^{X}(s,t),
		\end{align*}
		where $R^{X}(s,t):= \int_s^t F(u)dB^H(u)$ and $F(u):= f(\overline{X}_u)- f(\overline{X}_s)$.
		Hence, from Proposition \ref{prop:young-integral}, we get
		\begin{align}
			\label{cota RX0}
			|R^{X}(s,t)| &\leq   C_{\lambda} \left(|F(s)|\, |\delta B^H(s, t)| + \|F\|_{\lambda, [s,t]}\, \| B^H\|_{\lambda, [s,t]} |t-s|^{2\lambda}\right) \notag \\
			&=  C_{\lambda}  \|F\|_{\lambda, [s,t]}\, \| B^H\|_{\lambda, [s,t]} |t-s|^{2\lambda}.
		\end{align}
		
		We now estimate the $\lambda$-H\"older norm of $F$: the definition of $F$ and Hypothesis \ref{H1}
		allow us to write
		\begin{align*}
			\|F\|_{\lambda, [s,t]}&= \sup_{(u,v)\in \mathcal{S}_2([s,t])} \frac{|\delta F(u,v)|}{|v-u|^\lambda} 
			= \sup_{(u,v)\in \mathcal{S}_2([s,t])} \frac{|f(\overline{X}_v)- f(\overline{X}_u)|}{|v-u|^\lambda} \notag \\
			&\leq \sup_{(u,v)\in \mathcal{S}_2([s,t])} M_1 \sup_{\theta \in [-\tau, 0]}\frac{|{X}(v+\theta)- {X}(u+\theta)|}{|v-u|^\lambda} \notag \\
			&\leq \sup_{(u,v)\in \mathcal{S}_2([s,t])} M_1 \sup_{\theta \in [-\tau, 0]} \|X\|_\lambda\frac{|v- u|^{\lambda}}{|v-u|^\lambda} =
			M_1\|X\|_\lambda.
		\end{align*}
		Thus,  \eqref{cota RX0} leads to
		$$	|R^{X}(s,t)|  \leq C_{\lambda}  M_1\|X\|_\lambda \, \| B^H\|_{\lambda, [0,T]} |t-s|^{2\lambda} 
		= C_X |t-s|^{2\lambda}.$$
		Therefore, the proof is complete.\qed
	\end{proof}
	\section{Euler Approximation}\label{Euler}
	Trhoughout this section, we fix $\gamma\in(1/2,H)$. In consequence, we can assume that $B^H$ is  $\gamma$-H\"older continuous on $[0,T]$. It means, we have fixed an $\omega\in\Omega$ such that
	$B^H(\omega,\cdot)\in C^\gamma([0,T];\mathbb{R})$. Moreover, we suppose that Hypotheses \ref{H1} and \ref{H2} are satisfied.

	The numerical method for equation \eqref{eq1} implemented in this paper is only 
	studied in $[0, \tau]$ because, in  the remaining interval $[\tau,T]$, it is handled recursively. That means, if the Euler scheme for \eqref{eq1} is solved on the interval $[(k-1)\tau, k\tau]$, then the following step is to consider the equation
	\begin{eqnarray*}
		%\label{eq1-k}
		dX_{k}(t)&=& f(\overline{(X_k)}_t)dB^H(t), \quad t\in [k\tau, (k+1)\tau], \\
		X_k(k\tau)&=& X_{k-1}, \notag
	\end{eqnarray*}
	where $X_{k-1}\in C^\lambda([(k-1)\tau, k\tau]; \mathbb{R})$ is the solution of equation \eqref{eq1} on $[(k-1)\tau, k\tau]$. Taking into account this argument, now
	we introduce an Euler-Maruyama scheme to approximate the solution to equation \eqref{eq1}.      Thus, we  can assume that $T=\tau$ without loss of generality. 
	Therefore, the time step size for our scheme is
  	\begin{equation}
		\label{defdelta}
		\Delta=\Delta_n:= \frac{\tau}{n}, \quad n\in \mathbb{N}, \ n> \tau.
	\end{equation}
	
	\ 
	
	Let $\left[\![ -\tau, T\right]\!]_n=\{-\tau=t_{-n} <\cdots < 0=t_0 < t_1 < \cdots < t_{n} =T\}$ be a partition of the time interval $[-\tau, T]$ with $t_{k}=k\Delta$ for $k=-n, \cdots, 0, \cdots n$. 
	If $s, t \in [-\tau, T]$ with $s<t$, we denote
	$$\left[\![ s, t\right]\!]_n := [s, t]\cap \left[\![ -\tau, T\right]\!]_n.$$

	The Euler-Maruyama scheme for equation \eqref{eq1}  is defined continuously as
		\begin{equation}
		\label{euler}
		\begin{cases}
			X^{(n)}(t
			)= \xi(t), & \ t \in [-\tau,0]\\ 
			X^{(n)}(t)=X^{(n)}(k\Delta)+f(\overline{X}^{(n)}_{k\Delta})\delta B^H(k\Delta,  t), & \ t \in [k\Delta, (k+1)\Delta], \, k= 0, 1,\cdots, n-1,
		\end{cases}    
	\end{equation}
     %%%%%%%%%% quite 	Remember that 
     where
	$\overline{X}^{(n)}_{t}$ belongs to $C^\lambda([-\tau,0], \mathbb{R})$ and it is given by
    	\begin{align*}
		%	\label{eq6.1}
		\overline{X}^{(n)}_{t}(\theta):=\{ X^{(n)}(t+\theta) :  \theta \in  [-\tau,0]\}, \quad t\in [0, T].
	\end{align*}
	Note that we are able to  equivalently write the Euler scheme as:
	
	$$X^{(n)}{(t_{i+1})}=X^{(n)}(t_{i})+ f(\overline{X}^{(n)}_{t_{i}})\delta B^H({t_i, t_{i+1}}), \quad i=0, \cdots n-1.$$
	That is,
	\begin{equation}
		\label{eeuler1}
		\delta X^{(n)}({t_i, t_{i+1}})= f(\overline{X}^{(n)}_{t_i})\delta B^H({t_i, t_{i+1}}).		
	\end{equation}
	This relation can be extended to any pair of points $(s,t)$ in ${\mathcal S}_{2}([\![ 0, T ]\!]_{n})$, with $s< t$, as follows:
	\begin{equation}
		\label{deltaeuler}
		\delta X^{(n)}({s, t})= f(\overline{X}^{(n)}_{s})\delta B^H({s, t}) + R^n({s,t}),		
	\end{equation}
	where we have
	\begin{equation}
		\label{resto}
		R^n({s, t}):=	\delta X^{(n)}({s, t}) - f(\overline{X}^{(n)}_{s})\delta B^H({s, t}),
	\end{equation}
	with $R^n({t_i, t_{i+1}})=0$.
	
	\ 
	
	Remember that we are assuming  $T=\tau$ throughout the remainder of the paper.

	\begin{lemma}\label{lem1} Assume that Hypothesis \ref{H1} holds. Then,
		for $(s, u, t) \in {\mathcal S}_{3}([\![ 0, T]\!]_n)$, we have
		\begin{equation*}%\label{e1}
			|\delta R^n({s,u,t})|
			\le
			M_1 \|\delta \overline{X}^{(n)}(s, u)\|  \left\|B^H\right\|_{\lambda}|t-s|^{\lambda} ,
		\end{equation*}
	\end{lemma}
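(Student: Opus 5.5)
Since $\delta$ applied to $\delta X^{(n)}$ vanishes identically, the whole argument is an algebraic identity for $\delta R^n$ followed by a single Lipschitz estimate. Fix $(s,u,t)\in{\mathcal S}_3([\![0,T]\!]_n)$. By definition \eqref{resto} and the additivity $\delta X^{(n)}(s,t)=\delta X^{(n)}(s,u)+\delta X^{(n)}(u,t)$, all increments of $X^{(n)}$ cancel in
$$\delta R^n(s,u,t)=R^n(s,t)-R^n(s,u)-R^n(u,t).$$
What remains are the $f$-terms:
\begin{align*}
\delta R^n(s,u,t)&=-f(\overline{X}^{(n)}_s)\delta B^H(s,t)+f(\overline{X}^{(n)}_s)\delta B^H(s,u)+f(\overline{X}^{(n)}_u)\delta B^H(u,t)\\
&=\bigl(f(\overline{X}^{(n)}_u)-f(\overline{X}^{(n)}_s)\bigr)\delta B^H(u,t),
\end{align*}
using $\delta B^H(s,t)=\delta B^H(s,u)+\delta B^H(u,t)$.

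To bound this product, I apply Hypothesis \ref{H1} to the first factor:
$$|f(\overline{X}^{(n)}_u)-f(\overline{X}^{(n)}_s)|\le M_1\|\overline{X}^{(n)}_u-\overline{X}^{(n)}_s\|.$$
Here $\overline{X}^{(n)}_u-\overline{X}^{(n)}_s$ is the function $\theta\mapsto X^{(n)}(u+\theta)-X^{(n)}(s+\theta)$, which is what the statement denotes $\delta\overline{X}^{(n)}(s,u)$. The norm is the one used in Hypothesis \ref{H1}, namely the supremum norm on $[-\tau,0]$.

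For the second factor, $B^H$ is $\gamma$-H\"older with $\gamma>\lambda$, so $\|B^H\|_\lambda<\infty$ on $[0,T]$. Hence
$$|\delta B^H(u,t)|\le\|B^H\|_\lambda|t-u|^\lambda\le\|B^H\|_\lambda|t-s|^\lambda,$$
because $s\le u\le t$. Multiplying the two bounds gives the claim.

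The only points needing care are that the segments $\overline{X}^{(n)}_s$ and $\overline{X}^{(n)}_u$ belong to $C^\lambda([-\tau,0];\mathbb{R})$, so that Hypothesis \ref{H1} applies, and that the sign conventions in the cancellation are tracked correctly. The first point holds because $X^{(n)}$ is built from $\xi\in C^\lambda$ and from increments of $B^H$ with bounded coefficients on finitely many steps. Beyond these checks there is no real obstacle.
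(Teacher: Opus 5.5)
Your proof is correct and matches the paper's. You expand $\delta R^n(s,u,t)$ using additivity of increments (the paper phrases this as $\delta\delta X^{(n)}=\delta\delta B^H=0$) to obtain $\bigl(f(\overline{X}^{(n)}_u)-f(\overline{X}^{(n)}_s)\bigr)\delta B^H(u,t)$. You then bound it with the Lipschitz condition of Hypothesis \ref{H1} and the $\lambda$-H\"older bound on $B^H$, using $|t-u|\le|t-s|$, exactly as the paper does.
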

	where $M_1$ is the constant in Hypothesis \ref{H1}.
	\begin{proof}
		Applying the operator $\delta$ on both sides of \eqref{resto}, and using that $\delta\delta X^{(n)}=0$, $\delta\delta B^H=0$ and Hypothesis \ref{H1}, we obtain
		\begin{align*}%\label{b1}
			|\delta R^n({s,u,t})|&=	|\delta\delta X^{(n)}({s,u,t}) - \delta[f(\overline{X}^{(n)}_s)\delta B^H]({s,u,t})| \notag\\
			&=|\delta f(\overline{X}^{(n)})({s, u})\delta B^H({u, t}) - f(\overline{X}^{(n)}_s)\delta(\delta B^H)({s, u, t})|  \notag\\
			&=|\delta f(\overline{X}^{(n)})({s, u})\delta B^H({u, t})|
			\leq M_1 \|\delta \overline{X}^{(n)}({s, u})\|  \|B^H\|_{\lambda}|t-s|^{\lambda}.
		\end{align*}\qed
	\end{proof}
	
	Now, we establish the following auxiliary result.
	\begin{lemma}
		\label{lema2} Let Hypothesis \ref{H1} be satisfied. Then,
		for all $s, t \in [0,T]$,
		\begin{equation}\label{eq:bnd-holder-Yn}
			|\delta X^{(n)}({s, t})| \leq \hat{c} \, |t-s|^{\lambda}
		\end{equation}
		and 
		\begin{equation}\label{eq:bnd-holder-Yn2}
			\|\delta \overline{X}^{(n)}({s, t})\| \leq \hat{c} \, |t-s|^{\lambda},
			% acá || es la norma sup
		\end{equation}
		where $\hat{c}$ is a constant such that $\hat{c}\le c_{1} \exp(c_{2}(1+\|B^H\|_{\lambda}^{1/\lambda}))$ for some  constants $c_{1},c_{2}>0$.
		
	\end{lemma}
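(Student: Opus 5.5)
We will first work on the grid $[\![0,T]\!]_n$ and extend to all of $[0,T]$ at the end. The plan is a discrete sewing argument: apply Lemma \ref{lemswing}(i) to $R^n$ on small discrete blocks, then patch the blocks together. The exponential form of $\hat c$ is what one gets from patching about $\|B^H\|_\lambda^{1/\lambda}$ blocks.

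\textbf{Local estimate on a block.} Fix a block $[a,b]$, whose endpoints are grid points, of length $\varepsilon$ to be chosen. Set
\begin{equation*}
A:=\sup_{(s,t)\in\mathcal S_2([\![a,b]\!]_n)}\frac{\|\delta \overline{X}^{(n)}(s,t)\|}{|t-s|^\lambda}.
\end{equation*}
For $t\ge 0$ the supremum in $\|\delta \overline X^{(n)}(s,t)\|$ runs over $X^{(n)}(s+\theta)$ and $X^{(n)}(t+\theta)$. Since $T=\tau$, these points lie either in $[-\tau,0]$, where $\xi$ controls them, or in $[0,b]$. So $A$ is bounded by $\|\xi\|_\lambda$ plus the Hölder seminorm of $X^{(n)}$ on $[0,b]$.

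Lemma \ref{lem1} gives
\begin{equation*}
|\delta R^n(s,u,t)|\le M_1 A\|B^H\|_\lambda |t-s|^{2\lambda}.
\end{equation*}
Since $R^n(t_i,t_{i+1})=0$ and $2\lambda>1$, Lemma \ref{lemswing}(i) applied on $[\![a,b]\!]_n$ with $\mu=2\lambda$ yields $|R^n(s,t)|\le C M_1 A\|B^H\|_\lambda|t-s|^{2\lambda}$. Inserting this into \eqref{deltaeuler} gives
\begin{equation*}
|\delta X^{(n)}(s,t)|\le |f(\overline X^{(n)}_s)|\,\|B^H\|_\lambda|t-s|^\lambda + CM_1A\|B^H\|_\lambda \varepsilon^{\lambda}|t-s|^\lambda .
\end{equation*}

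\textbf{Controlling the $f$ term and closing the block bound.} Remark \ref{remarklineal} gives $|f(\overline X^{(n)}_s)|\le M_2(1+\|\overline X^{(n)}_s\|)$. Here $\|\overline X^{(n)}_s\|$ is bounded by $\|\xi\|_\infty$ plus the supremum of $|X^{(n)}|$ over the earlier blocks plus $A\varepsilon^\lambda$. Choose $\varepsilon$ so that $CM_1\|B^H\|_\lambda\varepsilon^\lambda\le 1/4$ and $M_2\|B^H\|_\lambda\varepsilon^\lambda\le 1/4$. Then the $A$-terms on the right-hand side can be absorbed into the left-hand side. The result is that the Hölder constant $A_k$ on the $k$-th block is at most $C\|B^H\|_\lambda(1+S_{k-1})$ plus the contribution of the preceding blocks, where $S_k:=\sup_{[-\tau,t_{(k)}]}|X^{(n)}|$.

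There is one subtlety. $A$ involves the past segment, so the bound must be done inductively over blocks. The quantity $A$ on the current block mixes the current block with earlier blocks, and the contribution of the earlier blocks is controlled by induction. Alternatively, one can handle the whole interval $[0,b]$ by a bootstrap argument.

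The resulting recursion is $S_k\le S_{k-1}+A_k\varepsilon^\lambda\le (1+c)S_{k-1}+c$. Iterating over the $N\approx T/\varepsilon\approx c(1+\|B^H\|_\lambda^{1/\lambda})$ blocks gives $S_N\le c_1 e^{c_2 N}$, and also $\max_k A_k\le c_1\exp(c_2(1+\|B^H\|_\lambda^{1/\lambda}))$.

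\textbf{Passing from blocks to the whole interval.} For $s,t$ in different blocks, we split $[s,t]$ at the block endpoints. We then use the bound $m\le N$ on the number of pieces, together with $\sum|t_{j+1}-t_j|^\lambda\le m^{1-\lambda}|t-s|^\lambda$. The extra factor $N^{1-\lambda}$ is absorbed into the exponential.

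\textbf{Extension off the grid.} For $s,t$ not on the grid, the scheme \eqref{euler} is $B^H$-increments with a frozen coefficient on each cell. On a single cell, $|\delta X^{(n)}(s,t)|\le |f|\,\|B^H\|_\lambda|t-s|^\lambda$. For general $s,t$, we combine the two partial cells with the grid estimate. The bound \eqref{eq:bnd-holder-Yn2} follows from \eqref{eq:bnd-holder-Yn} together with the Hölder continuity of $\xi$. Pairs $(s+\theta,t+\theta)$ that straddle $0$ are handled by splitting at $0$.

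\textbf{Expected main obstacle.} The hard step is the circularity created by the delay: $\|\delta \overline X^{(n)}\|$ in Lemma \ref{lem1} depends on values of $X^{(n)}$ outside the current block. I will handle this by defining $A$ on the cumulative interval $[-\tau,b]$ rather than on $[a,b]$ alone. A single block then controls the new increments, and the past part is bounded by the induction hypothesis.
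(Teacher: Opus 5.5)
Your proposal is correct and follows essentially the same route as the paper. It combines Lemma \ref{lem1} with the discrete sewing Lemma \ref{lemswing}(i) for $\mu=2\lambda$ on intervals of length of order $\|B^H\|_\lambda^{-1/\lambda}$, uses the linear growth bound of Remark \ref{remarklineal}, and handles off-grid points by the frozen-coefficient estimate together with the three-case split in $\theta$. A geometric recursion over roughly $\|B^H\|_\lambda^{1/\lambda}$ blocks then produces the exponential constant.

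The only real difference is how the local estimate is closed: you absorb $A$, whereas the paper runs an induction over the grid points $t_l$. Absorption is legitimate because, for fixed $n$, $X^{(n)}$ is a finite concatenation of H\"older pieces, so $A<\infty$ a priori; you should state this explicitly. Also note that the off-grid three-piece splitting is already needed inside the block step, which only changes your smallness constants.
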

	
	\begin{proof}
		We divide the proof into several steps. We begin by proving, via induction, that for all $l = 1, 2, \cdots n$ and $s, t \in   [0,T]$ with $0 \leq s < t \leq t_l$ we have:
		\begin{equation}
			\label{eqinduccion}
			|\delta {X}^{(n)}({s, t})| \leq C_1 \, |t-s|^{\lambda} \ \text{and} \  \|\delta \overline{X}^{(n)}({s, t})\| \leq C_1 \, |t-s|^{\lambda}.
		\end{equation}

		\textbf{Step 1}: Here, we deal with   $l = 1$ and  $s<t$. 
		
		By Remark \ref{remarklineal}, \eqref{euler}  and the H\"older property of $B^H$, we have
		\begin{align}
			\label{eq311}
			|\delta X^{(n)}({s, t})| &=	|f( \overline{X}^{(n)}_{0})| |\delta B^H({s, t})| \notag\\
			&\leq M_2 (1+ \|\overline{X}^{(n)}_{0}\|)  \|B^H\|_{\lambda}|t-s|^{\lambda}.
		\end{align}
		
		For \eqref{eq:bnd-holder-Yn2}, we write
		\begin{align}
			\label{normadeltaXb}
			\|\delta \overline{X}^{(n)}({s, t})\|  &= \sup_{\theta\in [-\tau, 0]} |\overline{X}^{(n)}_t(\theta)- \overline{X}^{(n)}_s(\theta)| 
			=\sup_{\theta\in [-\tau, 0]} |\delta {X}^{(n)}(s+\theta, t+ \theta)|.           
		\end{align}	
		We now derive an upper bound for $|\delta X^{(n)}(s+\theta, t+\theta)|$ for $\theta \in [-\tau,0]$. To do so, we consider three cases:     
		
		\textbf{Case 1. }  Here, $-\tau\leq \theta \leq -t$.
		
		In this case, $s+\theta$ and $t+\theta$ are less than or equal to zero. Thus, Hypothesis \ref{H1} leads us to
		$$|\delta X^{(n)}(s+\theta, t+\theta)|=|\xi(t+\theta)- \xi(s+ \theta)|\leq \|\xi\|_\lambda|t-s|^{\lambda}.$$ 
		\textbf{Case 2. }  Now $-s\leq \theta \leq 0$.
		
		Now, $s+\theta$ and $t+\theta$ are non-negative and less than $t_1$. Hence, by \eqref{eq311},
		$$|\delta X^{(n)}(s+\theta, t+\theta)|\leq M_2 (1+ \|\overline{X}^{(n)}_{0}\|)  \|B^H\|_{\lambda}|t-s|^{\lambda}.$$
		\textbf{Case 3. } Finally, $-t<\theta< -s$.
		
		We have that $s-t< \theta +s <0$ and $0< \theta + t< t-s$. Using 
		Hypothesis \ref{H1} again, together with  \eqref{eq311}, we obtain
		\begin{align*}
			|\delta X^{(n)}(s+\theta, t+\theta)|&\leq |\delta X^{(n)}(s+\theta, 0)| + |\delta X^{(n)}(0, t+\theta)|\\
			&\leq \|\xi\|_\lambda|s+\theta|^{\lambda} +   M_2 (1+ \|\overline{X}^{(n)}_{0}\|)  \|B^H\|_{\lambda}|t+\theta|^{\lambda}\\
			&\leq (\|\xi\|_\lambda+   M_2 (1+ \|\overline{X}^{(n)}_{0}\|)  \|B^H\|_{\lambda})|t-s|^{\lambda}.
		\end{align*}
		
		Therefore, from  \eqref{normadeltaXb}, we obtain 
		\begin{align*}%\label{eq311-b}
			\|\delta \overline{X}^{(n)}({s, t})\| 
			\leq & (\|\xi\|_\lambda  + M_2 (1+ \|\overline{X}^{(n)}_{0}\|)  \|B^H\|_{\lambda})|t-s|^{\lambda}.
		\end{align*}
		Taking $C_1\ge \|\xi\|_\lambda  + M_2 (1+ \|\overline{X}^{(n)}_{0}\|)  \|B^H\|_{\lambda}$,  we have the result for $l=1$. That is, \eqref{eqinduccion} is true for  $l=1$.

		\textbf{Step 2: Upper bound for $\mathbf{R^n}$}.
		The induction assumption is that (\ref{eqinduccion}) is true for   $s,t \in [0,T]$ with $0\leq s < t \leq t_{l}$.
		
		Let $s, t \in \left[\![ 0, \tau\right]\!]_n$ with $0\leq s < t \leq t_{l+1}$. 
		To estimate $|\delta X^{(n)}{(s, t)}|$, from \eqref{deltaeuler} and Lemma \ref{lemswing}, we first need to bound $|\delta R^n({s,u,t})|$. 
		So,  we first focus on the case $0\leq s < t=t_{l+1}$.
		For $0\leq s < u < t = t_{l+1}$,  $u \in \left[\![ 0, \tau\right]\!]_n$, with the help of Lemma \ref{lem1} and induction hypothesis,  we have that
		\begin{align*}%	\label{eqE5}
			|\delta R^n({s,u,t})| &\leq M_1 \|\delta \overline{X}^{(n)}({s,u})\|  \left\|B^H\right\|_{\lambda}|t-s|^{\lambda} \notag\\
			&\leq M_1C_1|u-s|^\lambda\left\|B^H\right\|_{\lambda}|t-s|^{\lambda}\notag\\
			&\leq M_1C_1\left\|B^H\right\|_{\lambda}|t-s|^{2\lambda}.
		\end{align*}
		Note that this inequality remains valid if
		$t<t_{l+1}$. Applying Lemma \ref{lemswing} over $\left[\![ 0, t_{l+1}\right]\!]_n$, we obtain:
		$$
		\| R^n\|_{[s,t], \lambda}\leq C_\lambda M_1C_1\left\|B^H\right\|_{\lambda}|t-s|^{\lambda}.
		$$
		Hence, for $0\le  u \le v  \le t_{l+1}$, $u,v \in \left[\![ 0, \tau\right]\!]_n$,
		\begin{equation}\label{cotaR}
			| R^n({u,v})|  \leq C_\lambda M_1C_1\left\|B^H\right\|_{\lambda}|v-u|^{2 \lambda}.
		\end{equation}
		
		%%%%%%%%%%%%%%%%%%%%%%%%%%%%%%%%%%%%%%%%%%%%%%%%%%%%%%%
		
		\textbf{Step 3: Local upper bound for $\mathbf{\delta X^{n}}$ and $\mathbf{\delta \overline{X}^{n}}$} 
		
		We will establish a uniform bound of $X^{(n)}({t})$ and $\overline{X}^{(n)}_{t}$. First, for $t\in [\![ 0, t_{l+1}]\!]$, from Remark \ref{remarklineal}, \eqref{deltaeuler} and  \eqref{cotaR}, we obtain 
		\begin{align}
			\label{DXst-0}
			|\delta X^{(n)}{(0, t)}|  &\leq  |f(\overline{X}^{(n)}_0)\|\delta B^H({0, t})| + |R^n{(0,t)}| \nonumber \\ 
			&\leq  M_2 ( 1 + \|\overline{X}^{(n)}_0\| ) |\delta B^H({0, t})| + |R^n{(0,t)}| \nonumber \\
			&\leq  M_2 ( 1 +\|\overline{X}^{(n)}_0\|) \left\|B^H\right\|_{\lambda}|t|^\lambda + C_\lambda M_1C_1\left\|B^H\right\|_{\lambda}|t|^{2 \lambda}\nonumber \\
			&=   \left(M_2(1 + \|\overline{X}^{(n)}_0\|) + C_\lambda M_1C_1|t|^{ \lambda}\right)\left\|B^H\right\|_{\lambda}|t|^\lambda.
		\end{align}
		
		From \eqref{DXst-0}, for $t\in [\![ 0, t_{l+1}]\!]$, we get 
		\begin{align}
			\label{cotaXnt}
			| X^{(n)}(t)|  & = |X^{(n)}(0) + \delta X^{(n)}(0, t)|\notag\\
			&\leq  |X^{(n)}(0)| +\left(M_2 ( 1 + \|\overline{X}^{(n)}_0\|)+ C_\lambda M_1C_1|t|^{ \lambda} \right)\left\|B^H\right\|_{\lambda}|t|^\lambda\notag\\
			& \leq  |X^{(n)}(0)| +  \|\overline{X}^{(n)}_0\|M_2\left\|B^H\right\|_{\lambda}t^\lambda+ J'_{2t}\notag\\
			& \leq  J'_{1t}\|\overline{X}^{(n)}_0\| +J'_{2t}, 
		\end{align}
		where 
		\begin{equation*}
			%	\label{J's}
			J'_{1t}=1+ M_2\left\|B^H\right\|_{\lambda}t^\lambda \quad  \text{and} \quad  J'_{2t}=\left(M_2 +  C_\lambda M_1C_1|t|^{ \lambda} \right)\left\|B^H\right\|_{\lambda}t^\lambda.
		\end{equation*}
		For $s\in [0,t_{l+1})$, let $t_j$ be the closest member of the partition to $s$ such that $t_j \leq s$. Then by Remark \ref{remarklineal} and the induction hypothesis,
		\begin{align}
			\label{cotaXns}
			|X^{(n)}(s)|&\leq|X^{(n)}(t_j)|+|X^{(n)}(s)-X^{(n)}(t_j)| \notag\\
			&= |X^{(n)}(t_j)| + |f(\overline{X}^{(n)}_{t_j})\delta B^H({t_j, s})| \notag\\
			&\leq  J'_{1t_j}\|\overline{X}^{(n)}_0\| +  J'_{2t_j} + M_2(1+\|\overline{X}^{(n)}_{t_j}\|)\left\|B^H\right\|_{\lambda}|s-t_j|^{\lambda} \notag\\
			&\leq  J'_{1s}\|\overline{X}^{(n)}_0\| +  J'_{2s} + M_2 \left(1+\|\delta\overline{X}^{(n)}(0, t_j)\|+\|\overline{X}^{(n)}_{0}\|\right)\left\|B^H\right\|_{\lambda}|s-t_j|^{\lambda} \notag\\
			&\leq  J'_{1s} \|\overline{X}^{(n)}_0\| + J'_{2s} + M_2 \left(1+C_1 |t_j|^{\lambda}+\|\overline{X}^{(n)}_0\|\right)\left\|B^H\right\|_{\lambda}|s-t_j|^{\lambda} \notag\\
			&\leq  J'_{1s} \|\overline{X}^{(n)}_0\| +  J'_{2s} + M_2 \left(1+C_1 |s|^{\lambda}+\|\overline{X}^{(n)}_0\|\right)\left\|B^H\right\|_{\lambda}|s|^{\lambda}.
		\end{align}
		
		Thus, based on the bound of $|\delta X^{(n)}{(0, t)}|$ given in \eqref{DXst-0}, we get,  from \eqref{cotaXnt}  and \eqref{cotaXns}, for all $s\in [0,t_{l+1}]$, that
		\begin{align}
			\label{DXs-01}
			|X^{(n)}(s)|&\leq J'_{1s}\|\overline{X}^{(n)}_0\| +J'_{2s} + M_2 \left(1+C_1 |s|^{\lambda}+\|\overline{X}^{(n)}_0\|\right)\left\|B^H\right\|_{\lambda}|s|^{\lambda} \notag \\
			&\leq   J_{1s}\|\overline{X}^{(n)}_0\| +J_{2s} \equiv  K(\overline{X}^{(n)}_0, C_1, f, B^H, s), 
		\end{align}
		where we define
		\begin{equation}
			\label{Js}
			J_{1s}=1+ 2M_2\left\|B^H\right\|_{\lambda}s^\lambda \quad  \text{and} \quad J_{2s}=\left(2M_2 +  C_\lambda M_1C_1|s|^{ \lambda}+M_2C_1|s|^{ \lambda} \right)\left\|B^H\right\|_{\lambda}s^\lambda.
		\end{equation}
		Note that
		\begin{equation}
			\label{JsJt}
			J_{1s}\leq J_{1t} \quad \text{and} \quad J_{2s}\leq J_{2t}, \quad \text{for} \ s\leq t.
		\end{equation}
		Now,  we will study  $\| \overline{X}^{(n)}_s\|=\sup_{\theta\in [-\tau, 0]} |{X}^{(n)}(s+\theta)|.$ This analysis is divided in two cases.
		
		\textbf{Case 1.} $\theta\in [-\tau, -s]$, then $s+\theta \leq 0$ and by \eqref{Js}
		$$|{X}^{(n)}(s+\theta)|=|\xi(s+\theta)|\leq \|\overline{X}^{(n)}_0\| \leq  J_{1s}\|\overline{X}^{(n)}_0\| +J_{2s}.$$
		\textbf{Case 2.} $\theta\in (-s, 0]$, then $0<s+\theta \leq s$ and from \eqref{DXs-01}
		$$|{X}^{(n)}(s+\theta)|\leq J_{1s}\|\overline{X}^{(n)}_0\| +J_{2s} .$$
		Hence, for $J_{1s}$ and $J_{2s}$ given in \eqref{Js}
		\begin{align}
			\label{DXs-02}
			\| \overline{X}^{(n)}_s\|  &\leq  J_{1s}\|\overline{X}^{(n)}_0\| +J_{2s}.
		\end{align}

		We continue our analysis by studying the increment $\delta X^n{(s, t)}$.  First, we take  $s,t \in \left[\![ 0, t_{l+1} \right]\!]_n$. From Remark \ref{remarklineal}, \eqref{deltaeuler}   and \eqref{DXs-02},
		\begin{align}
			\label{DXst}
			|\delta X^{(n)}{(s, t)}|  &\leq  |f(\overline{X}^{(n)}_s)||\delta B^H({s, t})| + |R^n{(s,t)}| \nonumber \\ 
			&\leq  M_2 \left( 1 + \|\overline{X}^{(n)}_s\| \right) |\delta B^H({s, t})| + |R^n{(s,t)}| \nonumber \\
			&\leq  M_2 \left( 1 +  J_{1s}\|\overline{X}^{(n)}_0\| +J_{2s} \right)  |\delta B^H({s, t})|+ |R^n{(s,t)}|.
		\end{align}
		
		Secondly, let $t_i \leq s < t \leq t_{i+1}, \, i=\{0, \hdots, l\}$. From Remark \ref{remarklineal}, \eqref{DXs-01}, \eqref{JsJt} and \eqref{DXs-02},
		\begin{align} \label{DXsl}
			|\delta X^{(n)}{(s, t)}| &= |f(\overline{X}^{(n)}_{t_i})||\delta B^H({s, t})|\notag\\
			&\leq M_2 \left( 1 + \|\overline{X}^{(n)}_{t_i}\| \right) |\delta B^H({s, t})|\notag\\
			&\leq M_2 \left( 1 +  J_{1t_i}\|\overline{X}^{(n)}_0\| +J_{2t_i} \right) |\delta B^H({s ,t})|\notag\\
			&\leq M_2 \left( 1 +  J_{1s}\|\overline{X}^{(n)}_0\| +J_{2s} \right) |\delta B^H({s,t})|.
		\end{align}
		Finally, let $s \leq t_i < t_j \leq t$, where $t_i,t_j \in \left[\![ 0, t_{l+1} \right]\!]_n$, and we assume that $t_i$ is the smallest member of the partition bigger than $s$, while $t_j$ is the biggest member of the partition smaller than $t$. Then, by \eqref{DXst} and \eqref{DXsl}
		
		\begin{align*}
			&|\delta X^{(n)}{(s, t)}| \notag \\
			\leq&
			|\delta X^{(n)}{(s, t_i)}|+|\delta X^{(n)}{(t_i, t_j)}|+|\delta X^{(n)}{(t_j, t)}|\notag\\
			\leq&  M_2 \left( 1 + J_{1s}\|\overline{X}^{(n)}_0\| +J_{2s} \right) |\delta B^H({s,t_i})|  + M_2 \left( 1 +  J_{1t_i}\|\overline{X}^{(n)}_0\| +J_{2t_i} \right)  |\delta B^H({t_i,t_j})|+ |R^n{(t_i,t_j)}|\notag\\ 
			&+ M_2 \left( 1 + J_{1t_j}\|\overline{X}^{(n)}_0\| +J_{2t_j} \right) |\delta B^H({t_j,t})|.
		\end{align*}
		Using the bounds  \eqref{cotaR} and \eqref{JsJt}
		\begin{align*}
			|\delta X^{(n)}{(s, t)}| \leq&  M_2 \left( 1 + J_{1t}\|\overline{X}^{(n)}_0\| +J_{2t} \right)\left\|B^H\right\|_{\lambda}|s-t_i|^\lambda \notag \\
			&+ M_2 \left( 1 +  J_{1t}\|\overline{X}^{(n)}_0\| +J_{2t} \right)  \left\|B^H\right\|_{\lambda}|t_i-t_j|^\lambda+  C_\lambda M_1C_1\left\|B^H\right\|_{\lambda}|t_i-t_j|^{2 \lambda}\notag\\ 
			&+ M_2 \left( 1 + J_{1t}\|\overline{X}^{(n)}_0\| +J_{2t} \right) \left\|B^H\right\|_{\lambda}|t-t_j|^{\lambda}.
		\end{align*}

		As a consequence, for all $s,t \in [0,t_{l+1}], \, s<t$, we obtain the estimate
		\begin{align}\label{estSTGen}
			|\delta X^{(n)}{(s, t)}| &\leq 3\left\|B^H\right\|_{\lambda}|t-s|^\lambda \left[M_2 \left( 1 + J_{1t}\|\overline{X}^{(n)}_0\| +J_{2t} \right)  + C_\lambda M_1C_1|t-s|^{\lambda} \right].
		\end{align}

		%%%%%%%%%%%%%%%%%%%%%%%%%
		Now, we study the increment $\delta \overline{X}^n{(s, t)}$ for $s,t \in [0,t_{l+1}]$ with $s < t$.  That is, the analysis of
		\begin{equation}
			\label{eqE1}
			\|\delta \overline{X}^{(n)}({s, t})\|		
			= \sup_{\theta \in [-\tau,0]}  \{|\delta X^{(n)}(s+\theta, t+\theta)|\}.
		\end{equation}
		
		So, we need to  give an upper bound of $|\delta X^{(n)}(s+\theta, t+\theta)|$ for $\theta \in [-\tau,0]$.
		
		If $\theta \leq -t$, then $s+\theta \leq 0$ and $t+\theta\leq 0$.  In this case, using that $\xi\in C^\lambda([-\tau, 0]; \mathbb{R})$, we have
		\begin{equation}
			\label{cotaAmk}
			|\delta X^{(n)}(s+\theta, t+\theta)|  =  |\xi(t+\theta)-\xi(s+\theta)| 
			\leq   \|\xi\|_\lambda|t-s|^{\lambda}.
		\end{equation}
		If $-s \leq  \theta \leq 0$,  then $s+\theta$ and $t+\theta$ are non negative and lower or equal than $t_{l+1}$, therefore by \eqref{estSTGen}
		\begin{equation}
			\label{cotaCmk}
			|\delta X^{(n)}(s+\theta, t+\theta)|  \leq
			3\left\|B^H\right\|_{\lambda}|t-s|^\lambda \left[M_2 \left( 1 + J_{1t}\|\overline{X}^{(n)}_0\| +J_{2t} \right)  + C_\lambda M_1C_1|t-s|^{\lambda} \right].
		\end{equation}
		For $-t< \theta <-s$, then $0 < t+\theta< t-s$ and $s-t < s+\theta < 0$. From \eqref{estSTGen} and  calculations analogous to those previously outlined 
		\begin{align}
			\label{cotaBmk}
			&|\delta X^{(n)}(s+\theta, t+\theta)|\notag\\
			\leq & |\delta X^{(n)}(s+\theta, 0)| + |\delta X^{(n)}(0, t+\theta)|\notag\\
			\leq & \|\xi\|_\lambda|s+\theta|^{\lambda}  + 3\left\|B^H\right\|_{\lambda}|t+\theta|^\lambda \left[M_2 \left( 1 + J_{1t+\theta}\|\overline{X}^{(n)}_0\| +J_{2t+\theta} \right)  +  C_\lambda M_1C_1|t+\theta|^{\lambda} \right]\notag \\
			\leq & \|\xi\|_\lambda|t-s|^{\lambda}  + 3\left\|B^H\right\|_{\lambda}|t-s|^\lambda \left[M_2 \left( 1 + J_{1t}\|\overline{X}^{(n)}_0\| +J_{2t} \right) + C_\lambda M_1C_1|t-s|^{\lambda} \right].
		\end{align}
		By \eqref{eqE1}, \eqref{cotaAmk}, \eqref{cotaCmk} and \eqref{cotaBmk}, for all $s,t \in [0,t_{l+1}]$; and by replacing the values of $J_{1t}$ and $J_{2t}$ given in \eqref{JsJt}, we get
		\begin{align}
			\label{eqE1-1}
			\|\delta \overline{X}^{(n)}({s, t})\|
			\leq & \left[\|\xi\|_\lambda  + 3\left\|B^H\right\|_{\lambda} \left[M_2 \left( 1 + J_{1t}\|\overline{X}^{(n)}_0\| +J_{2t} \right) + C_\lambda M_1C_1|t-s|^{\lambda} \right]\right]|t-s|^\lambda\notag \\
			\leq & \left[\|\xi\|_\lambda  + 3\left\|B^H\right\|_{\lambda} \left[M_2 \left( 1 + \|\overline{X}^{(n)}_0\|+2M_2 \left\|B^H\right\|_{\lambda}|t|^\lambda( 1 + \|\overline{X}^{(n)}_0\|\right)  \right.\right.\notag\\
			& \hspace{1.5cm} \left. \left.+ C_1|t|^{ \lambda}\big(M_2C_\lambda M_1\left\|B^H\right\|_{\lambda}|t|^{\lambda} + M_2^2 \left\|B^H\right\|_{\lambda}|t|^\lambda  + C_\lambda M_1\big) \right]\right]|t-s|^\lambda.
		\end{align}
		
		Since the bound for $|\delta X^{(n)}{(s, t)}| $  given in \eqref{estSTGen} is less than the one given for $\|\delta \overline{X}^{(n)}({s, u})\|$ in \eqref{eqE1-1}, we consider the largest
		one. Then 
		\begin{align}
			\label{DXst-2}
			|\delta X^{(n)}{(s, t)}|
			\leq & \left[\|\xi\|_\lambda  + 3\left\|B^H\right\|_{\lambda} \left[M_2 \left( 1 + \|\overline{X}^{(n)}_0\|+2M_2 \left\|B^H\right\|_{\lambda}|t|^\lambda( 1 + \|\overline{X}^{(n)}_0\|\right)  \right.\right.\notag\\
			& \hspace{1.5cm} \left. \left.+ C_1|t|^{ \lambda}\big(M_2C_\lambda M_1\left\|B^H\right\|_{\lambda}|t|^{\lambda} + M_2^2 \left\|B^H\right\|_{\lambda}|t|^\lambda + C_\lambda M_1\big) \right]\right]|t-s|^\lambda.
		\end{align}

		We will now complete the steps that allow us to obtain the inequalities in \eqref{eqinduccion}. Taking a small enough time $\psi$, in the interval $[0,t_{l+1}]\cap [0, \psi]$, by \eqref{eqE1-1} and  \eqref{DXst-2}, we have
		\begin{align}
			\label{calculoC1-0}
			\|  X^{(n)}\|_\lambda   &\leq   C_2  +  (C_4 + C_1C_3) \left\|B^H\right\|_{\lambda} |\psi|^{\lambda} 		\end{align}
		and
		\begin{align}
			\label{calculoC1-00}
			\| \delta \overline{X}^{(n)}(s, t)\| &\leq   [C_2  +  (C_4 + C_1C_3) \left\|B^H\right\|_{\lambda} |\psi|^{\lambda}] |t-s|^{ \lambda}
		\end{align}
		where $C_{2}= 3\|\xi\|_\lambda +  3M_2\|B^H\|_\lambda \left( 1 + \|\overline{X}^n_0\| \right)$, $C_{3}= 3C_\lambda M_1(M_2\left\|B^H\right\|_{\lambda}T^\lambda+1)+ 3M_2^2 \left\|B^H\right\|_{\lambda} T^\lambda$ and $C_4=  6M_2\|B^H\|_\lambda \left( 1 + \|\overline{X}^n_0\| \right)$. 
		
		To obtain some stability in the last norms estimates, we assume that  $  C_3 \left\|B^H\right\|_{\lambda}\psi^{\lambda} <  \frac{1}{2}$, if not we assume that 
		\begin{equation}
			\label{psi}
			\psi =\dfrac{1}{( 2 C_3(\left\|B^H\right\|_{\lambda}+1))^{1/\lambda}} 
		\end{equation}
		and also we take  the constant $C_1$ such that
		\begin{equation}
			\label{calculoC1-2}
			C_1 \ge 2\left(C_2 + C_4\left\|B^H\right\|_{\lambda} |\psi|^{\lambda}\right).
		\end{equation}
		Hence, on the interval $[0,t_{l+1}]\cap [0, \psi]$,  by \eqref{calculoC1-0}, \eqref{calculoC1-00}, \eqref{psi} and \eqref{calculoC1-2} we have
		\begin{equation*}
			\label{calculoC1-3}
			\|  X^{(n)}\|_\lambda  \leq C_1
		\end{equation*}
		and 
		\begin{align*}
			\label{calculoC1-31}
			\| \delta \overline{X}^{(n)}(s, t)\| &\leq   C_1 |t-s|^{ \lambda}.
		\end{align*}
		Thus, we have obtained the inequalities  \eqref{eqinduccion}.

		\ 
		
		\textbf{Step 4: Global upper bound for $\mathbf{\delta X^{n}}$.}
		Now, we extend the result from the interval $[0, \psi]$ to intervals of the form $[k\psi,(k+1)\psi]$. The calculus is similar, except for the fact that the initial values $X^{n}({0})$ and $\overline{X}^{(n)}_0$ must be updated to $X({k\psi})$ and $\overline{X}_{k\psi}$, respectively.  Note that the time step $\psi$ given by \eqref{psi} does not depend on $X({0})$ or $\overline{X}_0$. Hence it can be considered as a given constant. The interest constants are $C_{1}$ in inequality \eqref{calculoC1-2} and $K({X}^{(n)}_0, \overline{X}^{(n)}_0, C_1, f, B^H, t)$ in \eqref{cotaXnt}. \\
		
		We denote 
		%$t_0^{(k)}=k\psi$ and 
		$t_j^{(k)} $ as the $j$-term in $\left[\![ 0, T\right]\!]_n$ that belongs to $[k\psi, (k+1)\psi]$. 
		We consider
		\begin{equation*}
			{X}^{(n)}({k\psi})={X}^{(n)}({t^{(k)}_e}) \quad \text{and} \quad
			\overline{X}^{(n)}_{k\psi}=
			\overline{X}^{(n)}_{t^{(k)}_e} ,
		\end{equation*}
		where 
		\begin{equation*}
			t^{(k)}_e= \max \left[\![ 0,T\right]\!]\cap[0, k\psi], \quad k= 1, \cdots, n-1.
		\end{equation*}

		A straightforward generalization of \eqref{DXst-0}, for $t\in [\![ k\psi,(k+1)\psi ]\!] $, gives 
		\begin{align*}
			\label{DXst-0psi}
			|\delta X^{(n)}{(k\psi, t)}|  &\leq   \left(M_2(1 + \|\overline{X}^{(n)}_{k\psi}\|) + C_\lambda M_1C_1|t-k\psi|^{ \lambda}\right)\left\|B^H\right\|_{\lambda}|t-k\psi|^\lambda.
		\end{align*}
		Moreover, from 
		\eqref{DXs-01} and \eqref{DXs-02},
		\begin{equation*}\label{a1}
			|X^{(n)}({(k+1)\psi})|
			\le
			A_1 |\overline{X}^{(n)}_{k\psi}| + A_2
		\end{equation*}
		and 
		\begin{equation*}\label{a11}
			\|\overline{X}^{(n)}_{(k+1)\psi}\|
			\le
			A_1 \|\overline{X}^{(n)}_{k\psi}\| + A_2,
		\end{equation*}
		where $A_1=1+ 2M_2\left\|B^H\right\|_{\lambda}\psi^\lambda$ and  $A_2=\left(2M_2 +  C_\lambda M_1C_1|\psi|^{ \lambda}+M_2C_1|\psi|^{ \lambda} \right)\left\|B^H\right\|_{\lambda}\psi^\lambda$.
		Therefore, by induction procedure, we get 
		\begin{equation}\label{a2}
			\|\overline{X}^{(n)}_{k\psi}\|
			\le
			A_1^{k} \|\overline{X}_{0}^{(n)}\| + \frac{A_1^{k}-1}{A_1-1} A_2.
		\end{equation}
		
		Defining $T_{{\rm max}}$ as the number of intervals $[k\psi,(k+1)\psi]$ necessary to cover    $[0,T]$, from \eqref{psi}
		\begin{equation*}
			T_{{\rm max}}=\frac{T}{\psi}  =T \left({ 2}C_3 (\left\|B^H\right\|_{\lambda}+1)\right)^{\frac{1}{\lambda}}.
		\end{equation*}
		
		For all $k\le T_{{\rm max}}$,  \eqref{a2} implies
		\begin{equation}
			\label{cotaXkpsi}
			\|\overline{X}_{k\psi}^{(n)}\|
			\leq
			c_{1}(1+\|\overline{X}_{0}^{(n)}\|) \exp\left(  c_{2}(1+\|B^H\|_{\lambda}^{1/\lambda}) \right).
		\end{equation}
		In consequence,
		\begin{equation*}
			|{X}^{(n)}({k\psi})|
			\leq
			c_{1}(1+\|\overline{X}_{0}^{(n)}\|) \exp\left(  c_{2}(1+\|B^H\|_{\lambda}^{1/\lambda}) \right), \quad \text{for all} \ 0\le k\leq T_{{\rm max}}.
		\end{equation*}
		
		Finally, taking $s, t \in[0,T]$, we assume that $k\psi \leq s \leq (k+1)\psi \leq l\psi \leq t \leq (l+1)\psi$ with $l+1\leq  T_{{\rm max}}$, 
		\begin{eqnarray*}
			|\delta({X}^{(n)}({s, t}))|
			&\leq&
			|\delta({X}^{(n)}({s, (k+1)\psi})| +|\delta({X}^{(n)}({(k+1)\psi, (k+2)\psi})|+\cdots+ |\delta({X}^{n}({l\psi}, t)|\\
			&\leq& C_1(l-k+1)|t-s|^{\lambda}\\
			&\leq &C_1T_{{\rm max}}|t-s|^{\lambda}
		\end{eqnarray*}
		
		and similarly 
		\begin{eqnarray*}
			\|\delta(\overline{X}^{(n)}({s, t}))\|
			&\leq &C_1T_{{\rm max}}|t-s|^{\lambda}
		\end{eqnarray*}
		Hence, from  \eqref{calculoC1-2} and \eqref{cotaXkpsi}, we show that \eqref{eq:bnd-holder-Yn} and \eqref{eq:bnd-holder-Yn2} are true.
		\qed
	\end{proof}

	\section{Convergence of the Euler scheme}
	\label{sec:conv}
	Now, we prove that the Euler scheme introduced in Section \ref{Euler} converges to the solution of \eqref{eq1} with respect to the uniform norm. Towards this end, we study the behavior of the error processes $Z^{(n)}:= {X}-{X}^{(n)}$ and $\overline{Z}^{(n)}:= \overline{X}-\overline{X}^{(n)}$; 
	as well as  its increments.
	
	From \eqref{HolderX} and \eqref{eeuler1}, we have
	\begin{align*}
		\label{deltaZi}
		\delta Z^{(n)}(t_i, t_{i+1})&= (\delta X - \delta {X}^{(n)})(t_i, t_{i+1})\notag\\
		&=[f(\overline{X}_{t_i})- f(\overline{X}_{t_i}^{(n)})]\delta B^H(t_i, t_{i+1}) + R^{X}(t_i t_{i+1})\notag\\
		&= \sigma(t_i)\delta B^H(t_i, t_{i+1}) + R^{X}(t_i ,t_{i+1}),
	\end{align*}
	where $\sigma:[0,T]\to\mathbb{R}$ is the  function given by 
	$\sigma(u)=f(\overline{X}_u)- f(\overline{X}_{u}^{(n)}).$

	To analyze the convergence of the numerical scheme implemented in this Section \ref{Euler}, we impose an additional assumption.
	\begin{hypothesis} 	\label{H3} Let $\psi_i$, $i=1,\ldots,4,$ be four functions in $C^\lambda([-{\tau},0];
		\mathbb{R})$. Then, there exist $M_1,C>0$ such that   
		\begin{eqnarray*}
			|f(\psi_1)&-&f(\psi_2)-[f(\psi_3)-f(\psi_4)]|\\
			&\le&M_1\left\|\psi_1-\psi_2-[\psi_3-\psi_4]\right\|+C\left\|\psi_3-\psi_4\right\|
			\left[\|\psi_1-\psi_2-[\psi_3-\psi_4]\|+\left\|\psi_2-\psi_4\right\|\right].
		\end{eqnarray*}
	\end{hypothesis}
	\begin{remark}
		León and Tindel \cite{LeonTindel} have pointed out that Hypotheses \ref{H1} and \ref{H2} hold
		in the case that the coefficient $f$ in equation \eqref{eq1} has the form
		$$f(\psi)=\sigma\left(\int_{-{\tau}}^0\psi(\theta)\nu(d\theta)\right),\quad \psi\in C^\lambda([-{\tau},0];
		\mathbb{R}).$$
		Here $\sigma:\mathbb{R}\to\mathbb{R}$ is a continuous function with four bounded derivatives
		and $\nu$ is a finite signed measure on $[-\tau,0]$. Moreover, we claim that Hypothesis \ref{H3}
		also holds in this case. Indeed, the fundamental theorem of calculus implies 
		\begin{eqnarray*}
			|f(\psi_1)&-&f(\psi_2)-[f(\psi_3)-f(\psi_4)]|\\
			&=&\left| \int_0^1\sigma'\left(r\int_{-{\tau}}^0\psi_1(\theta)\nu(d\theta)+(1-r)\int_{-\tau}^0\psi_2(\theta)\nu(d\theta)\right)\int_{-\tau}^0\left(\psi_1(\theta)-\psi_2(\theta)\right)\nu(d\theta)dr\right.\\
			&&-\left.\int_0^1\sigma'\left(r\int_{-\tau}^0\psi_3(\theta)\nu(d\theta)+(1-r)\int_{-\tau}^0\psi_4(\theta)\nu(d\theta)\right)\int_{-\tau}^0\left(\psi_3(\theta)-\psi_4(\theta)\right)\nu(d\theta)dr
			\right|\\
			&\le&\left| \int_0^1\sigma'\left(r\int_{-\tau}^0\psi_1(\theta)\nu(d\theta)+(1-r)\int_{-\tau}^0\psi_2(\theta)\nu(d\theta)\right)\right.\\
			&&\times\left.\int_{-\tau}^0\left(\psi_1(\theta)-\psi_2(\theta)-[\psi_3(\theta)-\psi_4(\theta)]\right)
			\nu(d\theta)dr\right|\\
			&&+\left| \int_0^1\left[\sigma'\left(r\int_{-\tau}^0\psi_1(\theta)\nu(d\theta)+(1-r)\int_{-\tau}^0\psi_2(\theta)\nu(d\theta)\right)\right.\right.\\
			&&\left.\left.-\sigma'\left(r\int_{-\tau}^0\psi_3(\theta)\nu(d\theta)+(1-r)\int_{-\tau}^0\psi_4(\theta)\nu(d\theta)\right)\right]
			\int_{-\tau}^0\left(\psi_3(\theta)-\psi_4(\theta)\right)\nu(d\theta)dr\right|.
		\end{eqnarray*}
		Hence, it is now easy to see that our claim is true using the mean value theorem.
		
		Note that $f$ has an extension $g:C([-\tau,0];\mathbb{R})\to\mathbb{R}$ 
		given by
		$$g(\varphi)=\sigma\left(\int_{-\tau}^0\varphi(\theta)\nu(d\theta)\right),\quad \varphi\in C([-\tau,0];
		\mathbb{R}).$$
		It is easy to see that the mean value theorem implies that $g$ is  Fr\'echet differentiable with
		$$Dg(\varphi_1)(\varphi_2)=\sigma'\left(\int_{-\tau}^0\varphi_1(\theta)\nu(d\theta)\right)\int_{-\tau}^0\varphi_2(\theta)\nu(d\theta),\quad \varphi_1,\varphi_2\in C([-\tau,0];\mathbb{R}),$$
		and that there exists a constant $M>0$ such that
		$$\|Dg(\varphi_1)-Dg(\varphi_2)\|_{\mathcal{L}(C([0,-\tau];\mathbb{R}))}\le  M\|\varphi_1-\varphi_2\|,
		\quad \varphi_1,\varphi_2\in C([-\tau,0];\mathbb{R}).$$
		
		In general, Hypothesis \ref{H3} is satisfied if $f$ has an extension $g\in C^1(C([-\tau,0];\mathbb{R});\mathbb{R})$ with a bounded Fréchet derivative satisfying last inequality.
	\end{remark}

	On the other hand,	for $s, t \in S_2([\![ 0, \tau ]\!])$, 
	\begin{eqnarray}
		\label{deltaZ}
		\delta Z^{(n)}(s, t)=\sigma(s)\delta B^H(s, t) + R^{(n)}(s,t),
	\end{eqnarray}
	where
	\begin{equation}
		\label{deltaRZ}
		R^{(n)}(s,t):= \delta Z^{(n)}(s, t) - \sigma(s)\delta B^H(s, t).
	\end{equation}
	Notice that $ R^{(n)}(t_i,t_{i+1})= R^X(t_i,t_{i+1})$ for all $0\leq i \leq n-1$. Applying the operator $\delta$ on both sides of \eqref{deltaRZ} (remember that $\delta\delta=0$),
	\begin{equation}
		\label{deltaRZ2}
		\delta R^{(n)}(s,u,t)= -\delta \sigma(s, u)\delta B^H(u, t)
	\end{equation}

	Let $0 \leq u<s<t \leq T$. By \eqref{HolderX}, we obtain 
	$$\int_u^t f(\overline{X}_v) dB(v) = \int_u^s f(\overline{X}_v) dB(v)+\int_s^t f(\overline{X}_v) dB(v) $$
	and
	$$	f(\overline{X}_u)\delta B(u, t)+R^X(u,t) = f(\overline{X}_u)\delta B(u, s)+R^X(u,s)+f(\overline{X}_s)\delta B(s, t)+R^X(s,t).$$
	Consequently,
	\begin{align}\label{relres}
		R^X(u,t)-R^X(u,s) &=-f(\overline{X}_u)\delta B(u, t)+ f(\overline{X}_u)\delta B(u, s)+f(\overline{X}_s)\delta B(s, t)+R^X(s,t) \notag \\
		&=R^X(s,t)+(f(\overline{X}_s)-f(\overline{X}_u))\delta B(s, t).
	\end{align}

	\begin{theorem}
		Assume that Hypotheses \ref{H1}, \ref{H2} and \ref{H3} are satisfied. Also,
		let $Z^{(n)}:= {X}-{X}^{(n)}$ and  $\overline{Z}^{(n)}:= \overline{X}-\overline{X}^{(n)}$ where $X$ is the solution to equation \eqref{eq1} and $X^{(n)}$ is the Euler scheme defined in \eqref{euler}. For $\lambda$ given in Hypothesis \ref{H1}, $0 <  \varepsilon <2\lambda-1$ and $s, t \in[0,T]$ with $s<t$, we have
		\begin{equation*}\label{eq:bnd-holder-Zn}
			|Z^{(n)}_t| \leq \frac{\hat{C}_1}{n^{2\lambda-1-\varepsilon}},  \quad   
			|\delta Z^{(n)}{(s, t)}| \leq \frac{\hat{C}_2}{n^{2\lambda-1-\varepsilon}}|t-s|^\lambda,
		\end{equation*}
		\begin{equation*}\label{eq:bnd-holder-Zn2}
			\| \overline{Z}^{(n)}_t
			\| \leq  \frac{\hat{C}_1}{n^{2\lambda-1-\varepsilon}}  \quad  \text{and} \quad \|\delta \overline Z^{(n)}({s, t})\| \leq \frac{\hat{C}_2}{n^{2\lambda-1-\varepsilon}}|t-s|^\lambda,
		\end{equation*}    
		where $\hat{C}_1$ and $\hat{C}_2$ are positive constants such that $\hat{C}_1\leq c_1\exp(c_2(1+ \|B^H\|^{1/\lambda}_\lambda))$ and $\hat{C}_2\leq c_3\exp(c_4(1+ \|B^H\|^{1/\lambda}_\lambda))$, for some positive constants  $c_1, \cdots, c_4$ depending on $f$ and $\varepsilon$. 
	\end{theorem}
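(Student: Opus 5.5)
The proof will follow the scheme of Lemma \ref{lema2}: a discrete sewing argument on the grid $[\![0,T]\!]_n$, applied to the remainder $R^{(n)}$ of \eqref{deltaRZ}, run inductively over grid points and then patched over intervals of length $\psi$ to get the exponential constants. Set $\beta:=2\lambda-1-\varepsilon$. The induction hypothesis on $[0,t_l]$ will be
\[
\|\delta\overline Z^{(n)}(s,t)\|\le C\,n^{-\beta}|t-s|^\lambda,\qquad \|\overline Z^{(n)}_t\|\le C\,n^{-\beta},
\]
with $Z^{(n)}=0$ on $[-\tau,0]$ as the starting point.

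\textbf{Step 1 (a priori bounds).} I will use Theorem \ref{teo solution} and Lemma \ref{lema2}. Together they give $\|X\|_\lambda$ and $\|\delta\overline X^{(n)}(s,t)\|\le \hat c|t-s|^\lambda$, and also the analogous bound $\|\delta \overline X(s,t)\|\le \|X\|_\lambda|t-s|^\lambda$, obtained by the same three-case argument in $\theta$ used in Lemma \ref{lema2}.

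\textbf{Step 2 (increments of $\sigma$).} For $u<s$ on the grid, I apply Hypothesis \ref{H3} with $\psi_1=\overline X_s$, $\psi_2=\overline X^{(n)}_s$, $\psi_3=\overline X_u$, $\psi_4=\overline X^{(n)}_u$. This gives
\[
|\delta\sigma(u,s)|\le M_1\|\delta\overline Z^{(n)}(u,s)\|+C\|\delta\overline X(u,s)\|\big(\|\delta \overline Z^{(n)}(u,s)\|+\|\overline Z^{(n)}_s\|\big),
\]
and hence $|\delta\sigma(u,s)|\le c\,(\|\delta\overline Z^{(n)}(u,s)\|+|s-u|^\lambda\|\overline Z^{(n)}_s\|)$. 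Substituting into \eqref{deltaRZ2} yields
\[
|\delta R^{(n)}(u,s,t)|\le c\|B^H\|_\lambda\,|t-u|^{2\lambda}\,\Big(\sup\|\delta\overline Z^{(n)}\|_\lambda+\sup\|\overline Z^{(n)}\|\Big).
\]

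\textbf{Step 3 (discrete sewing).} The boundary terms are $R^{(n)}(t_i,t_{i+1})=R^X(t_i,t_{i+1})$, and \eqref{HolderX2} bounds them by $C_X\Delta^{2\lambda}$. This is the source of the rate. I apply Lemma \ref{lemswing}(ii) with exponent $\mu=1+\varepsilon'$, chosen slightly above $1$ rather than $2\lambda$. Then
\[
\mathcal M_\mu(R^{(n)})\le C_X\Delta^{2\lambda-1-\varepsilon'},
\]
and $\|\delta R^{(n)}\|_\mu$ is controlled by the previous display because $2\lambda>\mu$. Consequently, for grid points,
\[
|R^{(n)}(s,t)|\le \big(c\,n^{-(2\lambda-1-\varepsilon')}+c\|B^H\|_\lambda\,\mathcal Z\big)|t-s|^{1+\varepsilon'},
\]
where $\mathcal Z$ denotes the current sup of the error norms. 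Since $|t-s|^{1+\varepsilon'}\le |t-s|^{\lambda}|t-s|^{1+\varepsilon'-\lambda}$, the $\mathcal Z$ term carries a small factor $\psi^{1+\varepsilon'-\lambda}$ on short intervals. I then insert this into \eqref{deltaZ}, using $|\sigma(s)|\le M_1\|\overline Z^{(n)}_s\|$.

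\textbf{Step 4 (off-grid points and absorption).} For points off the grid, $\delta X^{(n)}$ is linear in $\delta B^H$ with a frozen coefficient. Hence $\delta Z^{(n)}(s,t)=\sigma(t_i)\delta B^H(s,t)+[\delta X(s,t)-f(\overline X_{t_i})\delta B^H(s,t)]$ for $t_i\le s<t\le t_{i+1}$. The bracket is bounded by $c\,\Delta^{\lambda}|t-s|^{\lambda}$ using \eqref{HolderX}, \eqref{HolderX2}, and $\|\delta\overline X\|\le c\,\Delta^\lambda$. This is of order $n^{-\lambda}\le n^{-\beta}$. I then treat the shifts $\theta$ in $\|\delta\overline Z^{(n)}(s,t)\|$ by the same three cases as in Lemma \ref{lema2}; the $\theta\le -t$ case is trivial since $Z^{(n)}=0$ there. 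On a window of length $\psi$, chosen with $c(\|B^H\|_\lambda+1)\psi^{\lambda}\le 1/2$ and $\psi$ also small in the $\psi^{1+\varepsilon'-\lambda}$ sense, the $\mathcal Z$ terms absorb into the left side, giving a bound $2c\,n^{-\beta}+$ (the error norm at the window's start). Iterating over $T/\psi\sim(1+\|B^H\|_\lambda)^{1/\lambda}$ windows, as in Step 4 of Lemma \ref{lema2}, produces the geometric growth $c_1\exp(c_2(1+\|B^H\|^{1/\lambda}_\lambda))$. Choosing $\varepsilon'=\varepsilon$ gives the stated rate.

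The main obstacle is Step 3. The naive choice $\mu=2\lambda$ gives $\mathcal M\sim O(1)$, so there is no rate. The loss of exactly $1+\varepsilon$ in the exponent, and hence the rate $n^{-(2\lambda-1-\varepsilon)}$, comes from trading $\mu$ against $\Delta^{2\lambda-\mu}$ while keeping $\mu>1$ so that Lemma \ref{lemswing} applies. I also expect the term $\|\overline Z^{(n)}_s\|$ from Hypothesis \ref{H3} to be awkward, because it is a sup norm rather than an increment. I will handle it using $\|\overline Z^{(n)}_s\|\le \|\overline Z^{(n)}_{a}\|+\|\delta\overline Z^{(n)}(a,s)\|$ relative to the window start $a$, and carry the sup norm and the Hölder norm jointly through the absorption.
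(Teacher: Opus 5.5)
Your proposal follows the paper's route. The ingredients are the same: discrete sewing via Lemma \ref{lemswing}(ii) with $\mu=1+\varepsilon$, with the rate coming from $\mathcal M(R^{(n)})$ because $R^{(n)}(t_i,t_{i+1})=R^X(t_i,t_{i+1})=O(\Delta^{2\lambda})$; Hypothesis \ref{H3} to control $\delta R^{(n)}=-\delta\sigma\,\delta B^H$; the frozen-coefficient decomposition \eqref{eqZST} off the grid; the three cases in $\theta$; and iteration over short windows. The pathwise rate $n^{-(2\lambda-1-\varepsilon)}$ with some finite random constant does come out of this. One cosmetic point: the assignment you state, $\psi_1=\overline X_s$, $\psi_2=\overline X^{(n)}_s$, $\psi_3=\overline X_u$, $\psi_4=\overline X^{(n)}_u$, gives the paper's bound with $C\|\overline Z^{(n)}_u\|(\|\delta\overline Z^{(n)}(u,s)\|+\|\delta\overline X^{(n)}(u,s)\|)$. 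Your displayed inequality is also true, but it comes from $\psi_1=\overline X^{(n)}_u$, $\psi_2=\overline X^{(n)}_s$, $\psi_3=\overline X_u$, $\psi_4=\overline X_s$. Either version works.

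The gap is in Step 4: it does not deliver the stated form $\hat C_i\le c\exp(c'(1+\|B^H\|_\lambda^{1/\lambda}))$ with deterministic $c,c'$.

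\begin{enumerate}[(i)]
\item Your absorption condition is $c(1+\|B^H\|_\lambda)\psi^\lambda\le 1/2$. Its constant $c$ contains $\|X\|_\lambda$ and $\|X\|_\infty$, through the term $C\|\delta\overline X(u,s)\|\,\|\overline Z^{(n)}_s\|$ and through $C_X$.
\item Since $f$ only has linear growth, these quantities are a priori controlled only by something of size $\exp(c\|B^H\|_\lambda^{1/\lambda})$, exactly like $\hat c$ in Lemma \ref{lema2}. So $T/\psi$ is of order $(c(1+\|B^H\|_\lambda))^{1/\lambda}$, not $(1+\|B^H\|_\lambda)^{1/\lambda}$ as you assert; it is exponentially large.
\item The per-window growth factor is $1+\eta$ with $\eta\gtrsim M_1\|B^H\|_\lambda\psi^\lambda$. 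This comes from $\sigma(s)\delta B^H(s,t)$ together with $\|\overline Z^{(n)}_s\|\le\|\overline Z^{(n)}_a\|+\cdots$ on a window starting at $a$.
\item Compounded over the windows, this gives a bound of order $\exp(c\,T\|B^H\|_\lambda\psi^{\lambda-1})$. That is doubly exponential in $\|B^H\|_\lambda^{1/\lambda}$ once $\psi$ is exponentially small, and it would also break the moment bound in the remark after the theorem.
\end{enumerate}

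The paper's own proof has the same gap. Its $C_2$ contains $\hat c$ and $C_X$. Its window condition \eqref{varsigma1} involves $C_1$, which \eqref{globalb1} multiplies by $4$ on each window. Yet \eqref{varsigma3} treats $\alpha_4$ as a deterministic constant.

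To actually obtain the claimed form you need an additional ingredient. One option is bounds on the H\"older norms of $X$ and $X^{(n)}$ at the window scale that do not themselves grow exponentially in $\|B^H\|_\lambda$, as happens for bounded $f$. Another is a sharper treatment of the linear part of the error equation than window-by-window Gronwall.
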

	
	\begin{proof}
		We follow steps similar to those used in the proof of Lemma \ref{lema2}. 
		We will prove by induction on $l$ that, over the interval $[0,t_{l+1}]\cap [0, \varsigma]$
		\begin{equation}
			\label{inductionZ1}
			|Z^{(n)}(0)|\leq \frac{d_1}{n^{2\lambda - 1 - \varepsilon}}, \quad |Z^{(n)}(s)|\leq \frac{C_1}{n^{2\lambda - 1 - \varepsilon}}, \quad |\delta Z^{(n)}(s,u)|\leq \frac{C_2}{n^{2\lambda - 1 - \varepsilon}}|u-s|^\lambda;
		\end{equation}
		and
		\begin{equation}
			\label{inductionZ2}
			\|\overline{Z}^{(n)}_0\|\leq \frac{d_1}{n^{2\lambda - 1 - \varepsilon}}, \quad \|\overline{Z}^{(n)}_s\|\leq \frac{C_1}{n^{2\lambda - 1 - \varepsilon}}, \quad \|\delta \overline{Z}^{(n)}(s,u)\|\leq \frac{C_2}{n^{2\lambda - 1 - \varepsilon}}|u-s|^\lambda,
		\end{equation}
		where the constant $\varsigma $ will be specified later, $t_l$ is given two lines after \eqref{defdelta}, $0< \varepsilon < 2\lambda-1$, $d_1,$ $C_1$ and $C_2$ are positive constants with $d_1\leq C_1$.
		
		First, we obtain that
		\begin{equation}
			\label{cotaz0n}
			|Z^{(n)}(0)|= |{X}(0)-{X}^{(n)}(0)| = |\xi(0)-\xi(0)|\leq \frac{d_1}{n^{2\lambda - 1 - \varepsilon}}
		\end{equation}
		and, consequently, 
		$$%	\label{cotazb0n}
		\|\overline{Z}^{(n)}_0\| = \|\overline{X}_0-\overline{X}^{(n)}_0\|= \sup_{\theta \in  [-\tau, 0]}|\xi(\theta)-\xi(\theta)|
		\leq \frac{d_1}{n^{2\lambda - 1 - \varepsilon}}.$$

		\textbf{Step 1} $l=1$.  Let $0\leq t\leq t_1$.  Since ${X}(0)={X}^{(n)}(0)$ and from \eqref{HolderX},
		\begin{align*}
			Z^{(n)}(t)&= X(t)-X^{(n)}(t) \notag\\
			&=X(0)+ \int_0^t f(\overline{X}_s)dB(s)-X^{(n)}(0)-f(\overline{X}^{(n)}_0)\delta B(0, t)
			=R^{X}(0,t).
		\end{align*}
		Hence, if $s,t \in [0,t_1], s<t$, we have, by Hypothesis \ref{H1}, Lemma \ref{lem1},  \eqref{HolderX2} and \eqref{relres},
		\begin{align}
			\label{dZb0t1}
			|\delta Z^{(n)}(s, t)|&=  |R^{X}(0,t)-R^{X}(0,s)|\notag \\
			&\leq |(f(\overline{X}_s)-f(\overline{X}_0))\delta B(s, t)| + |R^{X}(s,t)|\notag \\
			&\leq M_1\hat{c} s^{\lambda}\|B^H\|_\lambda|t-s|^{\lambda}+ C_X|t-s|^{2\lambda}  \notag\\
			&\leq \frac{\tau^\lambda (M_1\hat{c}\|B^H\|_\lambda+C_X)}{n^{2\lambda - 1 - \varepsilon}}|t-s|^{\lambda} 
		\end{align}
		and by \eqref{cotaz0n}
		\begin{align}
			|Z^{(n)}(t)|&\leq |Z^{(n)}(0) | + |\delta Z^{(n)}(0, t)|   \notag  \\
			&\leq \frac{d_1}{n^{2\lambda - 1 - \varepsilon}} + \frac{1}{n^{2\lambda - 1 - \varepsilon}}[ \tau^\lambda (M_1\hat{c}\|B^H\|_\lambda+C_X)]t_1^{\lambda} \notag \\
			&\leq  \frac{1}{n^{2\lambda - 1 - \varepsilon}}[d_1 + \tau^{2\lambda} (M_1\hat{c}\|B^H\|_\lambda+C_X) ].\notag
		\end{align}

		Now, we will study $\| \delta \overline{Z}^{(n)}(s, t)\|, $
		
		\begin{align*}
			\| \delta \overline{Z}^{(n)}(s, t)\|&=  \| \overline{Z}^{(n)}_{t}- \overline{Z}^{(n)}_{s}\| 
			=  \sup_{\theta \in  [-\tau, 0]} | \overline{Z}^{(n)}_{t}(\theta)- \overline{Z}^{(n)}_{s}(\theta)| \nonumber\\
			& = \sup_{\theta \in  [-\tau, 0]} | ({X}({t} + \theta)-{X}^{(n)}({t}+\theta))-({X}({s} + \theta)-{X}^{(n)}({s}+\theta))| \nonumber\\ 
			& = \sup_{\theta \in  [-\tau, 0]}  A(\theta) .   
		\end{align*}
		
		\textbf{Case 1.} $t \leq -\theta$, then 
		$$A(\theta)= |(\xi({t} + \theta) - \xi({t} + \theta))-(\xi({s} + \theta) - \xi({s} + \theta)) |=0.$$
		
		\textbf{Case 2.}  $t \geq -\theta$ and $s\leq -\theta$. Then, by \eqref{dZb0t1}, we have
		\begin{align*}
			A(\theta)&= |({X}({t} + \theta)-{X}^{(n)}({t}+\theta))-(\xi({s} + \theta) - \xi({s} + \theta)) | \notag\\
			&=|\delta Z^{(n)}(0, t+\theta)|\notag\\
			&\leq \frac{1}{n^{2\lambda - 1 - \varepsilon}}[ \tau^\lambda (M_1\hat{c}\|B^H\|_\lambda+C_X)]|t+\theta|^{\lambda}\notag\\
			&\leq \frac{1}{n^{2\lambda - 1 - \varepsilon}}[ \tau^\lambda (M_1\hat{c}\|B^H\|_\lambda+C_X)]|t-s|^{\lambda}.
		\end{align*}
		
		\textbf{Case 3.}  $s\geq -\theta$.  Due to  \eqref{dZb0t1} again, 
		\begin{align*}
			A(\theta)& \leq \frac{1}{n^{2\lambda - 1 - \varepsilon}}[  \tau^\lambda (M_1\hat{c}\|B^H\|_\lambda+C_X)]|t-s|^{\lambda}.
		\end{align*}
		Therefore, 	
		$$\| \delta \overline{Z}^{(n)}(s, t)\| \leq \frac{1}{n^{2\lambda - 1 - \varepsilon}}[ \tau^\lambda (M_1\hat{c}\|B^H\|_\lambda+C_X)]|t-s|^{\lambda}$$
		and
		\begin{align}
			\| \overline{Z}^{(n)}(t)\|&\leq \|\overline{Z}^{(n)}(0) \| + \|\delta \overline{Z}^{(n)}(0, t)\|   \notag  \\
			&\leq \frac{ d_1}{n^{2\lambda - 1 - \varepsilon}} +\frac{1}{n^{2\lambda - 1 - \varepsilon}}[ \tau^\lambda (M_1\hat{c}\|B^H\|_\lambda+C_X)]t^{\lambda}\notag \\
			&\leq \frac{ 1}{n^{2\lambda - 1 - \varepsilon}}[d_1  + \tau^{2\lambda} (M_1\hat{c}\|B^H\|_\lambda+C_X)]. \notag  
		\end{align}
		
		\textbf{Step 2: Upper bound for $\mathbf{R^n}$}
		
		We assume that the induction hypothesis (\ref{inductionZ1}) and \eqref{inductionZ2} hold for all $s, t \in [0,t_l] \cap [0, \varsigma]$. Here, we bound $R^{(n)}$ and $\delta R^{(n)}$.

		For $s<u<t=t_{l+1}\le\varsigma$, with $s,u,t \in \left[\![ 0, t_{l+1} \right]\!]_n$,  it follows from equation \eqref{deltaRZ2} and  Hypotheses \ref{H1} and \ref{H3} that
		\begin{align*}%	\label{eqRdeltaz}
			|\delta R^{(n)}(s,u,t)| &= |\delta \sigma(su)\delta B^H(u,t) |\notag \\
			& \leq  \|B^H\|_\lambda|t-u|^\lambda\left[M_1\|\delta\overline{Z}^{(n)}(s,u)\|+C
			\|\overline{Z}_s^{(n)}\|\left(\|\delta\overline{Z}^{(n)}(s,u)\|+\|\delta\overline{X}^{(n)}(s,u)\|
			\right)\right].
		\end{align*}
		Therefore, the induction assumption and Lemma \ref{lema2} yield
		\begin{align}
			\label{eqdRn}
			|\delta R^{(n)}(s,u,t)|&\leq \|B^H\|_\lambda|t-u|^\lambda\left[\frac{M_1C_2}{n^{2\lambda - 1 - \varepsilon}}|u-s|^{\lambda} + C \frac{C_1}{n^{2\lambda-1-\varepsilon}}|u-s|^\lambda \left[\frac{C_2}{n^{2\lambda-1-\varepsilon}}+ \hat{c} \right]\right] \notag \\
			&\leq  \frac{\|B^H\|_{\lambda}}{n^{2\lambda-1-\varepsilon}} |t-s|^{2\lambda} \left[M_1C_2 + CC_1 \left(C_2+ \hat{c} \right)\right] \notag \\
			&\leq  \frac{\|B^H\|_{\lambda}}{n^{2\lambda-1-\varepsilon}}  \left[M_1C_2 + CC_1 (C_2+ \hat{c} )\right] \varsigma^{2\lambda-1-\varepsilon} |t-s|^{1+\varepsilon} \notag \\
			&=  \frac{C_3}{n^{2\lambda-1-\varepsilon}} |t-s|^{1+\varepsilon}, 
		\end{align}
		where
		\begin{equation}
			\label{C3}
			C_3= \|B^H\|_{\lambda} \left[M_1C_2 + CC_1 (C_2+ \hat{c} )\right]\varsigma^{2\lambda-1-\varepsilon}.
		\end{equation}
		Using \eqref{HolderX2} and \eqref{deltaRZ}, we get
		
		\begin{equation*}
			|R^{(n)}(t_i,t_{i+1})|= |R^X(t_i,t_{i+1})|\leq C_X (t_{i+1}-t_i)^{2\lambda}.
		\end{equation*}
		Thus,
		$$ \frac{|R^{(n)}(t_i,t_{i+1})|}{|t_{i+1}- t_i|^{1+\varepsilon}} \leq \frac{C_X\tau^{2\lambda-1-\varepsilon} }{n^{2\lambda-1-\varepsilon}}.$$
		
		From Lemma \ref{lemswing} (Statement$(ii)$) and \eqref{eqdRn},
		\begin{equation}
			\label{C4}
			\|R^n\|_{[\![0, \varsigma]\!],1+\varepsilon}\leq \frac{C_{1+\varepsilon}C_3 + C_X\tau^{2\lambda-1-\varepsilon}}{n^{2\lambda-1-\varepsilon}}=\frac{C_4}{n^{2\lambda-1-\varepsilon}}
		\end{equation}
		thus we have obtained  a global bound for $\|R^n\|_{[0, \varsigma],1+\varepsilon}$. 
		
		\
		
		\textbf{Step 3: Upper bounds for $\mathbf{\delta {Z}^{(n)}}$ and $\mathbf{\delta \overline{Z}^{(n)}}$}
		
		Hypothesis \ref{H1},  \eqref{deltaZ},   \eqref{inductionZ2} and \eqref{C4}, for $0\leq s  < t=t_{l+1} \leq \varsigma, s,t \in \left[\![ 0, t_{l+1} \right]\!]_n$ lead us to
		\begin{align}
			\label{deltazn}
			| \delta Z^{(n)}(s,t)|&\leq |\sigma(s)||\delta B^H(s,t)| + |R^n(s,t)|\notag\\
			&\leq M_1 \|\overline{Z}^{(n)}_s\| \, \| B^H\|_\lambda|t-s|^{\lambda} + \frac{C_4}{n^{2\lambda-1-\varepsilon}}|t-s|^{1+\varepsilon}\notag\\
			&\leq   M_1\frac{C_1}{n^{2\lambda - 1 - \varepsilon}}\, \| B^H\|_\lambda|t-s|^{\lambda} + \frac{C_4}{n^{2\lambda-1-\varepsilon}}|t-s|^{1+\varepsilon}\notag\\
			&= \frac{M_1C_1\, \| B^H\|_\lambda \,  + C_4\varsigma^{1+\varepsilon-\lambda}}{n^{2\lambda - 1 - \varepsilon}}|t-s|^{\lambda} 
			= \frac{C'_5}{n^{2\lambda - 1 - \varepsilon}}|t-s|^{\lambda},
		\end{align}
		where 
		$$	%\label{C5}
		C'_5:= M_1C_1\, \| B^H\|_\lambda \,  + C_4\varsigma^{1+\varepsilon-\lambda}.
		$$

		Now, let $t_i \leq s<t \leq t_{i+1}, i \in \{0,1, \hdots, l\}$. Then, by \eqref{relres}
		\begin{align}\label{eqZST}
			& \delta Z^{(n)}(s,t)\notag\\
			&= \left(X(t_i)+\int_{t_i}^t f(\overline{X}_u)dB_u - X^{(n)}(t_i)-f(\overline{X}^{(n)}_{t_i})(B_t-B_{t_i})\right) \notag\\
			&\quad -\left(X(t_i)+\int_{t_i}^s f(\overline{X}_u)dB_u -X^{(n)}(t_i)-f(\overline{X}^{(n)}_{t_i})(B_s-B_{t_i})\right)\notag \\
			&= -f(\overline{X}^{(n)}_{t_i})(B_t-B_{s})+f(\overline{X}_{t_i})(B_t-B_{t_i})+R^X(t_i, t) \notag\\
			&\quad -f(\overline{X}_{t_i})(B_s-B_{t_i})-R^X(t_i , s)\notag \\
			&=(f(\overline{X}_{t_i})-f(\overline{X}^{(n)}_{t_i}))(B_t-B_{s})+(R^X(t_i , t)-R^X(t_i , s)) \notag \\
			&=(f(\overline{X}_{t_i})-f(\overline{X}^{(n)}_{t_i}))(B_t-B_{s})+R^X(s , t)+(f(\overline{X}_s)-f(\overline{X}_{t_i}))(B_t-B_{s}).
		\end{align}
		From the induction hypothesis, Hypothesis \ref{H1}, Theorem \ref{teo solution}   and \eqref{HolderX2}, we conclude
		\begin{align}\label{estZSTgen}
			|\delta Z^{(n)}(s,t)| &\le
			C_X|t-s|^{2\lambda}+M_1\hat{c}\| B^H\|_\lambda|s-t_i|^{\lambda}|t-s|^{\lambda}+M_1\|\overline{Z}^{(n)}_{t_i}\|\| B^H\|_\lambda|t-s|^{\lambda} \notag\\
			&\leq\frac{1}{n^{2\lambda - 1 - \varepsilon}}[ (C_X+M_1\hat{c}\|B^H\|_\lambda)\varsigma^{\lambda}]|t-s|^{\lambda} +\frac{ M_1C_1}{n^{2\lambda - 1 - \varepsilon}}\| B^H\|_\lambda|t-s|^{\lambda}.
		\end{align}

		Finally, in this step, we deal with the case  $0\leq s \leq t_i < t_j \leq t \leq t_{l+1}$, where $t_i,t_j \in \left[\![ 0, t_{l+1} \right]\!]_n$, are such that  $t_i$ is the smallest member of the partition bigger than $s$, while $t_j$ is the biggest member of the partition smaller than $t$. Then, \eqref{deltazn}  and  
		\eqref{estZSTgen} give
		\begin{align}\label{deltaZstgen}
			|\delta Z^{(n)}(s,t)| &\leq |\delta Z^{(n)}(s,t_i)|+|\delta Z^{(n)}(t_i, t_j)|+|\delta Z^{(n)}(t_j,t)|  \notag\\
			&\leq \frac{1}{n^{2\lambda - 1 - \varepsilon}}[ (C_X+M_1\hat{c}\|B^H\|_\lambda)\varsigma^{\lambda}+M_1C_1\| B^H\|_\lambda]|t_i-s|^{\lambda} \notag \\
			&\quad + \frac{C'_5}{n^{2\lambda - 1 - \varepsilon}}|t_j-t_i|^{\lambda} \notag \\
			&\quad + \frac{1}{n^{2\lambda - 1 - \varepsilon}}[ (C_X+M_1\hat{c}\|B^H\|_\lambda)\varsigma^{\lambda}+M_1C_1\| B^H\|_\lambda]|t-t_j|^{\lambda} \notag \\
			&\leq \frac{C_5}{n^{2\lambda - 1 - \varepsilon}}|t-s|^{\lambda},
		\end{align}
		where  
		\begin{equation}
			\label{c5}
			C_5:= 2(C_X+M_1\hat{c}\|B^H\|_\lambda)\tau^{\lambda} + 3M_1C_1\, \| B^H\|_\lambda \,  + C_4\varsigma^{1+\varepsilon-\lambda}.
		\end{equation}

		It is the turn of 		$\overline{Z}^{(n)}$:
		\begin{align*}
			\| \delta \overline{Z}^{(n)}(s,t)\|&=  \| \overline{Z}^{(n)}_{t}- \overline{Z}^{(n)}_{s}\| 
			=  \sup_{\theta \in  [-\tau, 0]} | \overline{Z}^{(n)}_{t}(\theta)- \overline{Z}^{(n)}_{s}(\theta)| \nonumber\\
			& = \sup_{\theta \in  [-\tau, 0]} | {X}({t} + \theta)-{X}^{(n)}({t}+\theta) - ({X}({s} + \theta)-{X}^{(n)}({s}+\theta))| \nonumber\\ 
			& = \sup_{\theta \in  [-\tau, 0]}  A(\theta).    
		\end{align*}
		The analysis of $A(\theta)$ is divided in three cases:
		
		\textbf{Case 1.}  Here, $-\tau \leq \theta \leq -t$. Thus, $t + \theta$ and $s+\theta$ are non-positive numbers, therefore
		$$A(\theta)= | \xi({t} + \theta)-\xi({t}+\theta) - (\xi({s} + \theta)-\xi({s}+\theta))|=0. $$
		
		\textbf{Case 2.} Now,  $-s \leq \theta \leq 0$. In consequence, $t + \theta$ and $s+\theta$ are non-negative numbers. Therefore,  \eqref{deltazn}, \eqref{eqZST} and \eqref{deltaZstgen} imply
		$$A(\theta)=|\delta Z^{(n)}(s+\theta, t+\theta)|\leq \frac{C_5}{n^{2\lambda - 1 - \varepsilon}}|t-s|^{\lambda}.$$
		
		\textbf{Case 3.} The last case is that  $-t  < \theta < -s$. Consequently,  $t + \theta>0$ and $s + \theta <0$. Then, using   \eqref{deltazn}, \eqref{eqZST} and \eqref{deltaZstgen}  again, we obtain
		\begin{align*}
			A(\theta) &= | {X}({t} + \theta)-{X}^{(n)}({t}+\theta) - (\xi({s} + \theta)-\xi({s}+\theta))|\\
			&= |\delta Z^{(n)}(0, t+\theta)|
			\leq \frac{C_5}{n^{2\lambda - 1 - \varepsilon}}|t+\theta|^{\lambda}
			\leq \frac{C_5}{n^{2\lambda - 1 - \varepsilon}}|t-s|^{\lambda}.
		\end{align*}
		Therefore, the last three cases allow us to conclude
		\begin{equation}
			\label{deltaznb}
			\| \delta \overline{Z}^{(n)}(s,t)\|\leq \frac{C_5}{n^{2\lambda - 1 - \varepsilon}}|t-s|^{\lambda}, \quad s,t\in[0,\varsigma].
		\end{equation}
		
		Finally, in this step, we figure out  an appropriate value for $\varsigma$ in order to obtain that \eqref{inductionZ1} and \eqref{inductionZ2} true. 
		
		From \eqref{deltaZstgen} and \eqref{deltaznb}, we can take $\varsigma$ such that $C_5\leq C_2$. Indeed, 
		by \eqref{C3}, \eqref{C4} and \eqref{c5}, we get 	
		\begin{align*}
			%\label{C5}
			C_5&= 2(C_X+M_1\hat{c}\|B^H\|_\lambda)\tau^{\lambda} + 3M_1C_1\, \| B^H\|_\lambda \,  + C_4\varsigma^{1+\varepsilon-\lambda}.\notag\\ 
			&= 2(C_X+M_1\hat{c}\|B^H\|_\lambda)\tau^{\lambda}+3M_1C_1\, \| B^H\|_\lambda + \left(C_{1+\varepsilon}C_3 + C_X\tau^{2\lambda-1-\varepsilon}\right)\varsigma^{1+\varepsilon-\lambda}\notag\\ 
			&= 2(C_X+M_1\hat{c}\|B^H\|_\lambda)\tau^{\lambda}+ 3M_1C_1\, \| B^H\|_\lambda \notag \\
			& \quad +  \left(C_{1+\varepsilon}(\|B^H\|_{\lambda} \left[M_1C_2 + CC_1 (C_2+ \hat{c} )\right]\varsigma^{2\lambda-1-\varepsilon}) + C_X\tau^{2\lambda-1-\varepsilon}\right)\varsigma^{1+\varepsilon-\lambda}\notag\\
			&=  2(C_X+M_1\hat{c}\|B^H\|_\lambda)\tau^{\lambda}+ 3M_1C_1\, \| B^H\|_\lambda \notag \\
			& \quad + C_{1+\varepsilon}\|B^H\|_{\lambda}CC_1\hat{c}\varsigma^{\lambda}   + C_X\tau^{2\lambda-1-\varepsilon}\varsigma^{1+\varepsilon-\lambda}  + C_{1+\varepsilon}\|B^H\|_{\lambda}C_2\left( M_1 + CC_1\right)\varsigma^{\lambda}. 
		\end{align*}        
		Hence, we select $\varsigma$ that satisfies 
		\begin{equation}
			\label{varsigma1}
			\varsigma^{\lambda}\leq \frac{1}{\|B^H\|_{\lambda}2 C_{1+\varepsilon}( M_1 + CC_1)}.
		\end{equation}
		Then, it is sufficient to have the inequality
		$$	\frac{C_2}{2}\geq  2(C_X+M_1\hat{c}\|B^H\|_\lambda)\tau^{\lambda}+ 3M_1C_1\, \| B^H\|_\lambda + C_{1+\varepsilon}\|B^H\|_{\lambda}CC_1\hat{c}\varsigma^{\lambda}   + C_X\tau^{2\lambda-1-\varepsilon}\varsigma^{1+\varepsilon-\lambda}.$$
		Assuming that $\varsigma\leq 1$ and choosing
		$$	C_2= 4(C_X+M_1\hat{c}\|B^H\|_\lambda)\tau^{\lambda}+ 6M_1C_1\, \| B^H\|_\lambda + 2C_{1+\varepsilon}\|B^H\|_{\lambda}CC_1\hat{c}  + 2C_X\tau^{2\lambda-1-\varepsilon},$$
		we have that $C_5\leq C_2$, obtaining the results relating to $\delta Z^{(n)}(s,t)$ and $\delta \overline{Z}^{(n)}(s,t)$ in \eqref{inductionZ1} and \eqref{inductionZ2}, respectively. 
	\end{proof}
	
	\
	
	\textbf{Step 4: Upper bound for $\mathbf{{Z}^{(n)}}$ and $\mathbf{\overline{Z}^{(n)}}$}
	
	In order to facilitate the notation of $C_2$, it is possible to define $C_2$ as 
	\begin{equation*}
		C_2=\alpha_1(1+C_1)(1+\| B^H\|_\lambda),
	\end{equation*}
	for some positive constant $\alpha_1$.

	Applying \eqref{inductionZ1} and \eqref{inductionZ2}  for $t\in [0,\varsigma]$, we obtain
	\begin{equation*}
		\|{Z}^{(n)}(t)\|\leq \|{Z}^{(n)}(0)\| + \|\delta{Z}^{(n)}(0,t)\|\leq \frac{\alpha_2}{n^{2\lambda - 1 - \varepsilon}}
	\end{equation*}
	and
	\begin{equation*}
		\|\overline{Z}^{(n)}_t\|\leq \|\overline{Z}^{(n)}_0\| + \|\delta\overline{Z}^{(n)}(0,t)\|\leq \frac{\alpha_2}{n^{2\lambda - 1 - \varepsilon}},
	\end{equation*}
	where $\alpha_2=d_1+\alpha_1(1+C_1)(1+\| B^H\|_\lambda)\varsigma^\lambda$.
	
	Taking $C_1>4d_1$ with $d_1>\frac14$ and
	\begin{equation}
		\label{varsigma2}
		\varsigma^\lambda \leq \frac{1}{4\alpha_1(1+\| B^H\|_\lambda)},
	\end{equation}
	we show that $a_2\leq C_1.$
	
	\
	
	\textbf{Step 5: Global bounds}
	
	Note that there exists a constant $\alpha_4>0$ such that
	\begin{equation}
		\label{varsigma3}
		\varsigma= \frac{\alpha_4}{(1+\| B^H\|_\lambda)^{1/\lambda}} ,
	\end{equation}
	and	the inequalities \eqref{varsigma1} and \eqref{varsigma2} hold.  Hence, taking 
	\begin{equation}
		\label{globalb5}
		C_1=4d_1, \quad d_1>\frac{1}{4} \quad \text{and} \quad C_2=\alpha_1(1+C_1)(1+\| B^H\|_\lambda),
	\end{equation}
	we obtain that the relations \eqref{inductionZ1} and \eqref{inductionZ2} are valid on $  [0, \varsigma]$.
	
	The next stage is to study our results in the intervals of the form $I_j=[j\varsigma, (j+1)\varsigma]$. We can assume that $\varsigma\in \left[\![ 0, T\right]\!]$ by selecting an appropriate $\alpha_4$ in \eqref{varsigma3}. The calculations for each $I_j$ are similar to those given in the previous steps, by updating the initial conditions ${Z}^{(n)}(0)$ and $\overline{Z}^{(n)}_0$  to ${Z}^{(n)}(j\varsigma)$ and $\overline{Z}^{(n)}_{j\varsigma}$, respectively. Therefore, the induction hypothesis on $I_j$ can be rewritten as
	\begin{equation*}
		%	\label{inductionZ1j}
		|Z^{(n)}(j\varsigma)|\leq \frac{d_{1,j}}{n^{2\lambda - 1 - \varepsilon}}, \quad |Z^{(n)}(s)|\leq \frac{C_{1,j}}{n^{2\lambda - 1 - \varepsilon}}, \quad |\delta Z^{(n)}(s,u)|\leq \frac{C_{2,j}}{n^{2\lambda - 1 - \varepsilon}}|u-s|^\lambda;
	\end{equation*}
	and
	\begin{equation*}
		%	\label{inductionZ2j}
		\|\overline{Z}^{(n)}_{j\varsigma}\|\leq \frac{d_{1,j}}{n^{2\lambda - 1 - \varepsilon}}, \quad \|\overline{Z}^{(n)}_s\|\leq \frac{C_{1,j}}{n^{2\lambda - 1 - \varepsilon}}, \quad \|\delta \overline{Z}^{(n)}(s,u)\|\leq \frac{C_{2,j}}{n^{2\lambda - 1 - \varepsilon}}|u-s|^\lambda.
	\end{equation*}
	Moreover, the relation \eqref{varsigma3} and \eqref{globalb5} can be replaced by
	\begin{equation*}
		%	\label{varsigma3j}
		\varsigma=	\varsigma({j+1})-j\varsigma= \frac{\alpha_4}{(1+\| B^H\|_\lambda)^{1/\lambda}} 
	\end{equation*}
	and
	\begin{equation}
		\label{globalb1}
		C_{1, j+1}=4C_{1,j} \quad \text{and} \quad C_{2,j}=\alpha_1(1+C_{1,j})(1+\| B^H\|_\lambda)
	\end{equation}
	respectively. 
	Now, iterating \eqref{globalb1}, we have
	\begin{equation}\label{d4}
		C_{1,j} = d_1 \, 4^{j}.
	\end{equation}
	
	From \eqref{varsigma3}, the number of iterations needed in order to fill $[0,T]$ is bounded by $\frac{(1+\|B^H\|_{\lambda})^{1/\lambda} \, T}{\alpha_{4}}$. 
	Then, from \eqref{globalb1} and \eqref{d4}, we get
	\begin{equation*}
		C_{1,j} \leq c_{1}\exp\left(  c_{2}(1+\|B^H\|_{\lambda}^{1/\lambda}) \right) \quad \text{and} \quad C_{2,j} \leq c_{3}\exp\left(  c_{4}(1+\|B^H\|_{\lambda}^{1/\lambda}) \right) 
	\end{equation*}
	for all $\ 0\le j < \frac{(1+\|B^H\|_{\lambda})^{1/\lambda} \, T}{\alpha_{4}}$. In this way, we have proven that the result is true.\qed
	%%%%%%%%%%%%%%%%%%%%%%%%%%%%%%%%%%%%%%%%%%%%%%%%%%%%%%%%%%%%%%%%%%%%%
	\begin{remark}
		Let $p \geq 1$ and $0 < \varepsilon <2\lambda-1$. Given that quadratic exponential moments for $\|B^H\|_{\lambda}$ exist, and since $1/\lambda<2$, by Theorem 2 we conclude
		\begin{equation*}
			\left(E\left[\|Z^{(n)}\|^p_{\lambda,[0,T]}\right] \right)^{1/p}\leq \frac{c_3 \left(E\left[\exp(pc_4(1+ \|B^H\|^{1/\lambda}_\lambda))\right]\right)^{1/p}}{n^{2\lambda-1-\varepsilon}}
			=\frac{C}{n^{2\lambda-1-\varepsilon}}. 
		\end{equation*}
		
	\end{remark}
	
	\section{Simulations}
	\label{sec:examples}
	Let us consider a one-dimensional fBm $B^H$ with Hurst parameter $H \in (1/2,1)$. We also consider the following stochastic functional equation
	\begin{eqnarray*}
		dX(t)&=& f(\overline{X}_t)dB^H(t), \quad t\in [0, T] \\
		\overline{X}_0&=& \xi \notag
	\end{eqnarray*}
	where $T=1$, $H=0.75$, $\tau=0.1$, and the initial condition $\xi(x): [-\tau, 0]\to \mathbb{R}$, given by $\xi(x)=x^2+2$ is a $\lambda$-H\"older continuous function for $\lambda\in (0,1)$. The functional $f$ is defined by 
	$$f(\psi)=\int_{-\tau}^0\psi(\theta)d\theta + \sin\left(\int_{-\tau}^0\psi(\theta)d\theta\right),\quad \psi\in C^\lambda([-\tau,0];
	\mathbb{R}).$$
	Under these conditions there exists a unique solution and the Euler scheme converges. To simulate the unique solution $X$, we set $\Delta = 500^{-1}$. FBm was generated by the algorithm proposed by Abry and Sellan \cite{abry} using the MATLAB code (wfbm). In addition, $r=\tau \cdot N$ corresponds to the number of time steps to the history memory $\tau$.

	\begin{figure}[H]
		\centering	\includegraphics[width=0.8\textwidth]{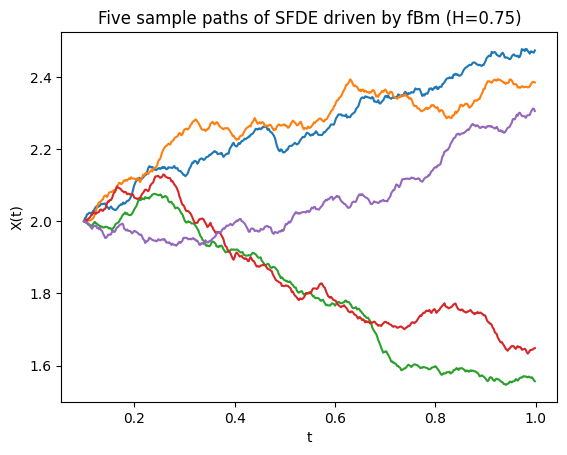}
		\label{lineal}
	\end{figure}

	\section*{Acknowledgements}
	Johanna Garz\'on was partially supported by HERMES project 58557 and Mathamsud EXPLORE-SDE AMSUD240037. J. Lozada was supported by the SECIHTI fellowship CVU 996983. Soledad Torres was partially supported by Fondecyt Reg. project number 1230807, 1221373, Mathamsud SMILE AMSUD230032, Mathamsud 
	EXPLORE-SDE AMSUD240037 and Mathamsud SiJaVol AMSUD240024. 
	This work was supported by Centro de Modelamiento Matemático (CMM) BASAL fund FB210005 for center of excellence from ANID-Chile.  \\

\end{document}